\title{A primer on chainmails: structures for point-free connectivity}
\author{Jean F. Du Plessis}
\address{Center for Theoretical Physics, Massachusetts Institute of Technology, Cambridge, MA 02139, USA}
\email{jeandp@mit.edu}
\author{Zurab Janelidze}
\address{Department of Mathematical Sciences\\ Stellenbosch University, South Africa 
and  
National Institute for Theoretical and Computational Sciences (NITheCS), Stellenbosch, South Africa}
\email{zurab@sun.ac.za}
\author{Bernandus A. Wessels}
\address{Department of Mathematical Sciences\\ Stellenbosch University, South Africa}
\email{a3wes@icloud.com}
\subjclass[2020]{54D05, 06A06, 06B23, 18A40, 06D22}
\keywords{}
\setlist{leftmargin=1cm}
\declaretheoremstyle[
  headformat = \textcolor{red}{\NUMBER. }\NAME{}\NOTE,
  headindent=0.8cm
]{actstyle}
\theoremstyle{actstyle}
\newtheorem{theorem}{Theorem}
\newtheorem{lemma}[theorem]{Lemma}
\newtheorem{corollary}[theorem]{Corollary}
\newtheorem{remark}[theorem]{Remark}
\newtheorem{example}[theorem]{Example}
\newtheorem{definition}[theorem]{Definition}
\newcommand{\powerset}[1]{\operatorname{\mathcal{P}}(#1)}
\begin{document}
\vspace{-\baselineskip}
\vspace{-\baselineskip}
\vspace{-\baselineskip}

\null\hfill\begin{tabular}[t]{l@{}}
  \text{MIT-CTP/5728}
\end{tabular}

\vspace{\baselineskip}
\vspace{\baselineskip}

\maketitle

\vspace{-\baselineskip}

\begin{abstract} In point-free topology, one abstracts the poset of open subsets of a topological space, by replacing it with a frame (a complete lattice, where meet distributes over arbitrary join). In this paper we propose a similar abstraction of the posets of connected subsets in various space-like structures. The analogue of a frame is called a \emph{chainmail}, which is defined as a poset admitting joins of its \emph{mails}, i.e., subsets having a lower bound. The main result of the paper is an equivalence between a subcategory of the category of complete join-semilattices and the category of chainmails.
\end{abstract}

\makeatletter
  \def\l@subsection{\@tocline{2}{0pt}{4pc}{5pc}{}}
\makeatother

\section*{Introduction}

\textbf{This paper was a work in progress and has been superseded by \href{https://arxiv.org/abs/2501.19226}{arXiv:2501.19226}}

Point-free topology generalizes various constructions, properties, and results about topological spaces to frames (recall that a frame is a complete lattice where finite meets distribute over infinite joins). The generalization is given by replacing the frame of open subsets of a topological space, ordered under subset inclusion, with an abstract frame. See \cite{picado_pultr_2012} for an overview of this theory.

With this paper, we propose a similar abstract study of connectedness in a topological space, or any other space-like structure, where the poset of connected subsets is to be replaced with an abstract poset. Abstract approaches to connectedness have been explored previously (see \cite{10.2307/1969257,article,Hammer1968Sep,Serra1998Nov} or see \cite{stadler_stadler_2015} for a broader summary). Our work can be considered as a next step after \cite{Serra1998Nov} (a reference, which we became aware of only when finalizing this paper) that defines connectivity abstractly, but in the context of an ambient complete lattice. The notion of abstract connectivity that we formulate in this paper is defined by similar axioms as those in \cite{Serra1998Nov}, except that we discard the ambient lattice; we show, however, that an ambient lattice can always be canonically produced. 

In our approach to abstract connectivity we introduce \emph{chainmails}, which are posets having the following property:
\begin{itemize}
   \item[(C)] Every \emph{mail-connected} subset of the poset has a join.
\end{itemize}
Here `mail-connected' refers to the graph-theoretic connectedness of non-empty subsets of the poset, where the graph is given by elements of the poset as vertices, with a pair $\{v_1,v_2\}$ of vertices being an edge when $v_1^\downarrow \cap v_2^\downarrow\neq\varnothing$ (see Figure~\ref{fig:mail-connectedness}). Note that if in (C) we drop `mail-connected', we get the definition of a complete lattice. 

In various contexts where connectivity can be defined, the posets of connected objects always form a chainmail. This includes all previous point-set approaches to abstract connectivity (see \cite{stadler_stadler_2015}), the point-free approach taken by Serra in \cite{Serra1998Nov}, and all classical situations which are examples of the former: usual connectivity in topological spaces, path-connectivity in spaces, connectivity in graphs and others. Among examples covered by Serra are also various forms of connectivity arising in digital imaging (see \cite{Serra1998Nov}).

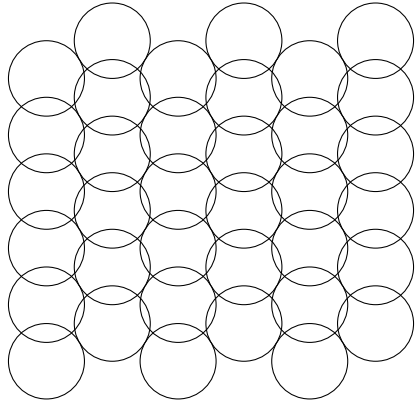
\begin{figure}
\[
\begin{tikzpicture}
    \def\n{5} % Define the number of circles in each row and column
    \def\radius{0.5} % Radius of each circle
    % Loop to draw circles in a square grid
    \foreach \row in {0,...,\n}
        \foreach \col in {0,...,\n} {
            % Calculation of x and y shifts
            \pgfmathsetmacro\xshift{\col*2*\radius*cos(30)}
            \pgfmathsetmacro\yshift{\row*1.5*\radius}
            % Adjust vertical shift for every second row to interlock circles
            \ifodd\col
                \pgfmathsetmacro\yshift{\yshift+\radius}
            \fi
            \draw (\xshift, \yshift) circle (\radius);
        }
\end{tikzpicture}
\]
\caption{Venn diagram visualization for the notion of mail-connectedness}
\label{fig:mail-connectedness}
\end{figure}

In this paper we show that there is an adjunction between the category of chainmails, and a category of join-complete lattices with appropriate morphisms. The adjunction is given, in one direction, by assigning to a complete lattice the poset of its (suitably defined) connected elements, which turns out to be a chainmail, and in the other direction, by assigning to a chainmail the poset of its down-closed subchainmails, which happens to be a complete lattice. A standard fact in category theory is that every adjunction restricts to an equivalence of categories. In our case, we obtain an equivalence between the category of all chainmails and a suitable subcategory of the category of join-complete lattices, given by `locally connected' complete lattices (a generalisation of locally connected frames, see e.g.~\cite{picado_pultr_2012}). 

\begin{table}[ht]
    \centering
    \begin{tabular}{|r||r|r|r|r|r|r|r|r|r|r|}
        \hline
        $n$ & 1 & 2 & 3 & 4 & 5 & 6 & 7 & 8 & 9 & 10\\\hline
        Mail-connected chainmails & 1 & 1 & 2 & 5 & 16 & 62 & 303 & 1842 & 14073 & 134802\\\hline
    \end{tabular}
    \caption{Number of mail-connected (single component) chainmails of size $n$}
    \label{tab:chml_counts}
\end{table}

\section{Connectivity in complete lattices}

Given a complete lattice $L$, by a \emph{separated} set we mean a subset $S\subseteq L$ such that $0\notin S$ and for any two distinct elements $s,s'\in S$, we have $s\wedge s'=0$. Consider the following conditions on an element $a$ in $L$.

\begin{itemize}
    \item[(E1)] $a\neq 0$ and for any $x,y$, if $a\le x\vee y$ and $x\wedge y=0$, then $a\le x$ or $a\le y$.

    \item[(E2)] $a\neq 0$ and for any $x,y$, we have:
    \[\big[x\wedge y=0\text{ and }a=x\vee y\big]\implies \big[x=0\text{ or } y=0\big].\]
    \item[(E3)] for any separated set $S$, if $a=\bigvee S$ then $a\in S$ (and consequently, $S=\{a\}$).
    \item[(E4)] for any separated set $S$, if $a\le \bigvee S$ then $a\le s$ for some $s\in S$.
\end{itemize}

It is easy to see that in a general complete  lattice $L$ we still have the following implications:
\[\textrm{(E1) implies (E2), 
(E3) implies (E2), and 
(E4) implies (E1-3).}\] We say that an element $a$ in $L$ is \emph{connected} when (E4) holds, or equivalently, all of (E1-4) hold.

\begin{definition}
A \emph{locally connected lattice} is a complete lattice where every element is a join of connected elements.     
\end{definition}

A \emph{chained set} in a complete lattice $L$ is a non-empty subset $C$ of $L$ such that for any two elements $x,y\in C$ there exists a sequence $x=c_0,c_1,\dots,c_n=y$ such that $c_i\in C$ and $c_{i-1}\wedge c_i\neq 0$ for each $i\in\{1,\dots,n\}$. Note that singletons whose element is not $0$ are both chained and separated, and moreover, they are the only chained separated sets.

The following result is Lemma~1.2 in \cite{BABOOLAL19913}, but we prove a slightly stronger form here.

\begin{lemma}\label{lemC}
In a complete lattice, the join of a chained set of connected elements is connected.
\end{lemma}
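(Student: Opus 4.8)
The plan is to verify condition (E4) directly for $a=\bigvee C$, where $C$ is a chained set of connected elements; since (E4) is equivalent to connectedness, this suffices. First observe that $a\neq 0$: the set $C$ is non-empty, and any $c\in C$ is connected, hence nonzero, and satisfies $c\le a$.

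Now fix a separated set $S$ with $a\le\bigvee S$; the goal is to find $s\in S$ with $a\le s$. For each $c\in C$ we have $c\le a\le\bigvee S$, so since $c$ is connected, (E4) applied to $c$ gives some $s\in S$ with $c\le s$. The key observation is that this $s$ is \emph{unique}: if $c\le s$ and $c\le s'$ with $s\neq s'$ in $S$, then $c\le s\wedge s'=0$, contradicting $c\neq 0$. Write $s_c$ for this unique element of $S$ lying above $c$.

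Next I would show that $c\mapsto s_c$ is constant on $C$. If $c,c'\in C$ satisfy $c\wedge c'\neq 0$, then $0\neq c\wedge c'\le s_c\wedge s_{c'}$, so $s_c\wedge s_{c'}\neq 0$, which forces $s_c=s_{c'}$ because $S$ is separated. Since $C$ is chained, any two of its elements are joined by a finite sequence $c_0,\dots,c_n$ in $C$ with $c_{i-1}\wedge c_i\neq 0$ for all $i$, so iterating the previous step yields $s_c=s_{c'}$ for all $c,c'\in C$. Let $s$ denote this common value. Then $c\le s$ for every $c\in C$, hence $a=\bigvee C\le s$, establishing (E4).

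There is no serious obstacle here: the whole argument rests on the two-line uniqueness remark that a connected element lies below exactly one member of a separated set, together with the propagation of this membership along edges $c\wedge c'\neq 0$ of the chained set. Relative to Lemma~1.2 of \cite{BABOOLAL19913}, the strengthening is essentially that we allow $C$ to be an arbitrary chained set inside an arbitrary complete lattice rather than imposing extra hypotheses, but the combinatorial core of the proof is unchanged.
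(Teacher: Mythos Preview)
Your proof is correct and follows essentially the same route as the paper: verify (E4) by picking, for each $c\in C$, an element $s_c\in S$ above it, then use the chained condition and separatedness of $S$ to show all the $s_c$ coincide. Your write-up is in fact a bit more careful, making the uniqueness of $s_c$ and the case $a\neq 0$ explicit.
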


\begin{proof}
Consider the join $\bigvee C$ of a chained set $C$ of connected elements. Let $S$ be a separated set such that $\bigvee C\leqslant \bigvee S$. Then $x\leqslant \bigvee C$ for each $x\in C$. Since each $x\in C$ is connected, we get that $x\leqslant s^x$ for some $s^x\in S$. Consider any two $x,y\in C$. Since $C$ is chained, there is a sequence $x=c_0,c_1,\dots,c_n=y$ in $C$ such that $c_{i-1}\wedge c_i\neq 0$ for each $i\in\{1,\dots,n\}$. Since $S$ is separated, this forces
\[s^x=s^{c_0}=s^{c_1}=\dots=s^{c_n}=s^y.\]
Thus, $s^x$ is the same element $s$ of $S$ for each $x
\in C$. Then, $\bigvee X\leqslant s$, proving that $\bigvee X$ is connected.
\end{proof}

An element $x$ in a locally connected lattice can be presented as the join of connected elements below it. We can partition the set of connected elements below $x$ according to maximal chained sets of connected elements below $x$. Consider the join $c$ for each component $C$. By the lemma above, $c$ will itself be connected. From this it follows easily that $c\in C$. We denote the set of such $c$'s by $x^\ast$. 

\begin{lemma}\label{lemB}
For any element $x$ in a locally connected lattice $L$, the set $x^\ast$ is separated, and moreover, $x=\bigvee x^\ast$.   
\end{lemma}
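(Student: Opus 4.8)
The plan is to treat the two assertions separately, using Lemma~\ref{lemC} and the description of $x^\ast$ set up just before the statement: each $c \in x^\ast$ is the join of one component $C$ of the partition of the connected elements below $x$ into maximal chained sets, and moreover $c \in C$.

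I would dispatch the identity $x = \bigvee x^\ast$ first. Since every member of a component $C$ is connected and lies below $x$, we get $c = \bigvee C \le x$ for each $c \in x^\ast$, hence $\bigvee x^\ast \le x$. For the reverse inequality, local connectedness gives $x = \bigvee\{a : a \text{ connected},\ a \le x\}$; each such $a$ lies in a (unique) component $C$, so $a \le \bigvee C \in x^\ast$, whence $x \le \bigvee x^\ast$.

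For separatedness, $0 \notin x^\ast$ is immediate because connected elements are nonzero by condition (E1). The real content is that distinct $c, c' \in x^\ast$ meet in $0$. I would argue by contradiction: write $c = \bigvee C$ and $c' = \bigvee C'$ with $C \ne C'$ distinct components, and suppose $c \wedge c' \ne 0$. Then $C \cup C'$ is a chained set of connected elements below $x$, since any two of its members can be linked by a chain running through $c$ and then through $c'$, the passage from $c$ to $c'$ being legitimate precisely because $c \wedge c' \ne 0$. This contradicts the maximality of $C$ (equivalently of $C'$), as a maximal chained set of connected elements below $x$ cannot be properly contained in another such set.

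The one place that demands care — the ``hard part'', modest as it is — is this contradiction step: for the chain linking an element of $C$ to an element of $C'$ to stay inside $C \cup C'$, one genuinely needs $c \in C$ and $c' \in C'$, which is exactly where the preceding observation (resting on Lemma~\ref{lemC}, that $\bigvee C$ is connected and therefore belongs to $C$) is used. Everything else is routine lattice manipulation.
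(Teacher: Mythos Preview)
Your argument is correct and follows the same overall outline as the paper: establish $0\notin x^\ast$, show that two elements of $x^\ast$ with nonzero meet must coincide, and deduce $x=\bigvee x^\ast$ from local connectedness.

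There is one small but genuine difference in the separatedness step. The paper, assuming $c\wedge d\neq 0$, invokes local connectedness to produce a connected element $e\le c\wedge d$ and then observes that $e$ forces $c$ and $d$ into the same maximal chained set. You instead use $c\wedge c'\neq 0$ directly, together with the fact (from Lemma~\ref{lemC}) that $c\in C$ and $c'\in C'$, to exhibit $C\cup C'$ as a chained set properly containing $C$. Your route is slightly more economical: it does not appeal to local connectedness a second time, and it makes explicit where the inclusion $c\in C$ is needed. The paper's route is a touch shorter to state but hides that dependence inside the phrase ``forces $c$ and $d$ to belong to the same maximal chained set''. Both reach the same contradiction with maximality.
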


\begin{proof} Firstly, we note that since each element $c$ of $x^\ast$ is connected, we have: $0\notin x^\ast$.
Suppose $c\wedge d\neq 0$ for some $c,d\in x^\ast$. Then there is a connected $e\leqslant c\wedge d$. This forces $c$ and $d$ to belong to the same maximal chained sets of elements below $x$, and hence, $c=d$. Therefore, $x^\ast$ is separated. Note that every element of $x^\ast$ is below $x$. Since every connected element below $x$ is below one of the elements of $x^*$, and since $x$ is a join of connected elements, it then follows that $x=\bigvee x^\ast$. 
\end{proof}

The lemma below shows that the set $x^*$ is unique.

\begin{lemma}\label{lem:unique_separation}
    For any separated sets $S$ and $T$ of connected elements in a lattice, we have that:
    \[\left[\bigvee S=\bigvee T\right]\implies S=T.\]
\end{lemma}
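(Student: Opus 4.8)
The plan is to prove the two inclusions $S\subseteq T$ and $T\subseteq S$; by symmetry it suffices to treat the first, so swapping the roles of $S$ and $T$ at the end will finish the argument. Write $a=\bigvee S=\bigvee T$. First I would fix an arbitrary $s\in S$. Since $s$ is connected it is nonzero, and $s\le a=\bigvee T$; because $T$ is separated, the characterization (E4) of connectedness gives an element $t\in T$ with $s\le t$. Then I would run the same step in the opposite direction: $t$ is connected and $t\le a=\bigvee S$, so (E4) applied to the separated set $S$ produces some $s'\in S$ with $t\le s'$. Chaining these gives $s\le t\le s'$.

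The second step is to collapse this chain using separatedness of $S$. Since $0\neq s\le s'$, we have $s\wedge s'=s\neq 0$, so $s$ and $s'$ cannot be distinct members of the separated set $S$, i.e. $s=s'$. Together with $s\le t\le s'=s$ this forces $s=t$, hence $s\in T$. As $s\in S$ was arbitrary, $S\subseteq T$, and the symmetric argument yields $T\subseteq S$, so $S=T$.

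I do not expect a real obstacle here: the only things to be careful about are that ``connected'' is used in its form (E4) — every connected element below a join of a separated set lies below one of the members — and that connected elements are nonzero, both of which are recorded in the discussion preceding the lemma. One should also note, as in the earlier lemmas, that although the statement says ``in a lattice'' the expressions $\bigvee S$ and $\bigvee T$ presuppose the relevant joins exist, and nothing beyond their existence is used.
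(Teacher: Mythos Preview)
Your argument is correct and matches the paper's proof essentially step for step: pick $s\in S$, use (E4) to find $t\in T$ above it, use (E4) again to find $s'\in S$ above $t$, then invoke separatedness of $S$ to collapse $s\le t\le s'$ to an equality and conclude by symmetry. The only difference is cosmetic --- you spell out the meet computation $s\wedge s'=s\neq 0$ where the paper simply asserts $s_1=s_2=t$.
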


\begin{proof}
    For any $s_1\in S$ we have that $s_1\le \bigvee S=\bigvee T$. Since, $s_1$ is connected there must be some $t\in T$ such that $s_1\le t$. Applying the same logic to $t$, there must be some $s_2$ such that $s_1\le t\le s_2$. Since $S$ is separated, we must have that $s_1=s_2=t$. Now $s_1\in T$ and the same reasoning applies to any element in $T$, so $S=T$.
\end{proof}

It turns out that like in a frame, in a locally connected lattice (E1-4) are equivalent.

\begin{theorem}
In a locally connected lattice $L$, any element $a$ satisfying (E2) is connected. 
\end{theorem}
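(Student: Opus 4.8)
The plan is to prove that (E2) implies (E4) for $a$, using local connectedness as a substitute for the distributivity we do not have. So let $S$ be a separated set with $a\le\bigvee S$; the goal is to produce $s\in S$ with $a\le s$, which gives (E4) and hence connectedness of $a$.

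First I would isolate the one consequence of local connectedness that will be used repeatedly: an element $u$ of $L$ equals $0$ as soon as no nonzero connected element lies below it, since $u$ is the join of the connected elements below it and $0$ is not connected (it fails (E2)). In particular, whenever $u\wedge v\neq 0$ there is a nonzero connected $e\le u\wedge v$.

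Next I would look at the family $\{a\wedge s:s\in S\}$. For distinct $s,s'\in S$ we have $(a\wedge s)\wedge(a\wedge s')\le s\wedge s'=0$, so after deleting the zero terms we get a separated set $T$. Using local connectedness I would show $a=\bigvee T$: the element $a$ is the join of the connected elements $c\le a$, and each such $c$ satisfies $c\le\bigvee S$, hence $c\le s$ for some $s\in S$ because $c$ is connected, hence $c\le a\wedge s$; therefore $a\le\bigvee_{s\in S}(a\wedge s)\le a$. Since $a\neq 0$, the set $T$ is nonempty. Finally I would show $T$ is a singleton by contradiction: if $t_0\in T$ and $T\setminus\{t_0\}\neq\varnothing$, put $x=t_0$ and $y=\bigvee(T\setminus\{t_0\})$, so that $x\vee y=\bigvee T=a$ and $x\neq 0\neq y$; moreover $x\wedge y=0$, because any nonzero connected $e\le x\wedge y$ would lie below some $t'\in T\setminus\{t_0\}$ (as $T\setminus\{t_0\}$ is separated), forcing $e\le t_0\wedge t'=0$, which contradicts the observation of the previous paragraph. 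But $x\wedge y=0$, $x\vee y=a$, $x\neq 0\neq y$ contradicts (E2). Hence $T=\{a\wedge s_0\}$ for a single $s_0\in S$, and then $a=a\wedge s_0\le s_0$, which is (E4).

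I expect the only genuine obstacle to be the failure of distributivity: one cannot simply assert $a\wedge\bigvee S=\bigvee_{s\in S}(a\wedge s)$, nor that splitting the join $\bigvee T$ makes the cross-meet $t_0\wedge\bigvee(T\setminus\{t_0\})$ vanish. The device that circumvents this is the weak, ``connected-element'' form of distributivity recorded at the start, and the key point is noticing that this weak form is exactly strong enough both to reconstruct $a$ as $\bigvee T$ and to kill the cross-meet, at which stage (E2) can finally be invoked.
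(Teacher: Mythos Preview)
Your proof is correct. The core device---showing a separated decomposition of $a$ must be a singleton by splitting off one piece, using local connectedness to kill the cross-meet, and then invoking (E2)---is exactly the same trick the paper uses. The difference is in which decomposition is fed into this trick.

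The paper works with the canonical decomposition $a=\bigvee a^{\ast}$ into maximal connected components, supplied by Lemma~\ref{lemB} (which in turn rests on Lemma~\ref{lemC} about joins of chained sets). It then shows $a^{\ast}$ is a singleton, so $a$ literally equals a connected element. You instead verify (E4) directly: given the separated $S$ with $a\le\bigvee S$, you build the ad hoc decomposition $T=\{a\wedge s:a\wedge s\neq 0\}$, reconstruct $a=\bigvee T$ using only the definition of local connectedness and (E4) for connected elements, and then run the same singleton argument on $T$. Your route is more self-contained---it does not need the $a^{\ast}$ machinery or the chained-set lemma---while the paper's route cashes in on structure it has already developed and yields the slightly stronger conclusion that $a$ coincides with its unique connected component.
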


\begin{proof}
Consider an element $a$ satisfying (E2). From the previous lemma, $a=\bigvee a^\ast$. Consider any element $c\in a^\ast$ and the set $S=a^\ast\setminus\{c\}$. If $c\wedge \bigvee S\neq 0$ then there is a connected element $c'$ below $c\wedge \bigvee S$. Since $S$ is separated, $c'\leqslant s$ for some $s\in S$ such that $s\neq c$. Then $c'\le c\wedge s=0$, a contradiction. So $c\wedge \bigvee S=0$. But $a=c\vee \bigvee S$, so by (E2), $\bigvee S=0$. This implies that $S$ is empty, and so $c$ is the only element of $a^\ast$, which in turn implies that $a=c$ and $a$ is connected.
\end{proof}

\begin{corollary}
In a locally connected lattice, the conditions (E1-4) are equivalent.
\end{corollary}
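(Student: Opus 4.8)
The plan is to assemble this as a pure bookkeeping consequence of the implications already in hand, so the proof will be a single short paragraph. Recall that in \emph{any} complete lattice we have recorded (E1) $\Rightarrow$ (E2), (E3) $\Rightarrow$ (E2), and (E4) $\Rightarrow$ (E1)--(E3); these are the short direct verifications noted before the definition of a connected element, and they use nothing about $L$ beyond completeness. The one piece of genuine content is the theorem just proved: in a locally connected lattice, (E2) $\Rightarrow$ (E4). So in a locally connected lattice I would exhibit the cycle
\[
\text{(E4)} \Rightarrow \text{(E1)} \Rightarrow \text{(E2)} \Rightarrow \text{(E4)},
\]
which makes (E1), (E2), (E4) pairwise equivalent, together with
\[
\text{(E3)} \Rightarrow \text{(E2)} \Rightarrow \text{(E4)} \Rightarrow \text{(E3)},
\]
which folds (E3) into the same equivalence class. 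Hence (E1)--(E4) are all equivalent.

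There is no real obstacle here: the corollary is immediate once the theorem is available, and the theorem is where all the work has already been spent (via Lemmas~\ref{lemC}--\ref{lem:unique_separation} and the canonical separation $x \mapsto x^\ast$). The only thing worth a second glance is checking that the general implications quoted above genuinely require no local-connectedness hypothesis, so that local connectedness is invoked exactly once, for the single arrow (E2) $\Rightarrow$ (E4); a quick inspection of each of those implications confirms this, and also confirms that (E1)--(E4) need \emph{not} be equivalent in an arbitrary complete lattice, which is what makes the corollary worth stating.
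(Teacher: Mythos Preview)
Your proposal is correct and matches the paper's intent: the corollary is stated without proof precisely because it is immediate bookkeeping from the general implications (E4)$\Rightarrow$(E1--3), (E1)$\Rightarrow$(E2), (E3)$\Rightarrow$(E2) together with the theorem (E2)$\Rightarrow$(E4) just established. One tiny inaccuracy in your parenthetical aside: the theorem's proof actually relies only on Lemma~\ref{lemB} (and indirectly Lemma~\ref{lemC} via the construction of $x^\ast$), not on Lemma~\ref{lem:unique_separation}; but this does not affect the corollary itself.
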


\begin{example}
In the complete lattice of closed sets of a Hausdorff space, where singletons are closed sets, a closed set $a$ satisfies (E1) if and only if it satisfies (E2) and if and only if it is a connected closed set; $a$ satisfies (E3) if and only if it is a singleton. It satisfies (E4) if and only if it is a clopen singleton. Thus, this lattice is locally connected if and only if the Hausdorff space is discrete, in which case the lattice is just the power set of the underlying set.
\end{example}

For a complete lattice, by $\mathcal{S}(L)$ we denote the poset of separated sets of connected elements, where $S_1\le S_2$ for two such sets provided every element of $S_1$ is below some element of $S_2$. There is an obvious monotone function $\nu_L\colon \mathcal{S}(L)\to L$ which maps each separated set to its join.

\begin{theorem}\label{thmD}
For a complete lattice $L$ the following conditions are equivalent: 
\begin{enumerate}
    \item $L$ is locally connected.
    \item $\nu_L$ is an isomorphism.
    \item $\nu_L$ is surjective.
\end{enumerate}
\end{theorem}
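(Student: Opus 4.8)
The plan is to prove the cycle $(1)\Rightarrow(2)\Rightarrow(3)\Rightarrow(1)$, with essentially all of the content concentrated in the first implication.

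For $(1)\Rightarrow(2)$, assume $L$ is locally connected. Surjectivity of $\nu_L$ is immediate from Lemma~\ref{lemB}, since $x=\bigvee x^\ast$ with $x^\ast\in\mathcal{S}(L)$ for every $x\in L$. The point that needs care is that $\nu_L$ is an \emph{order} isomorphism and not merely a monotone bijection. I would establish this by showing that $\nu_L$ reflects order: given $S_1,S_2\in\mathcal{S}(L)$ with $\bigvee S_1\le\bigvee S_2$, each $s\in S_1$ satisfies $s\le\bigvee S_1\le\bigvee S_2$, and since $s$ is connected and $S_2$ is separated, condition (E4) yields $s\le t$ for some $t\in S_2$; hence $S_1\le S_2$ by the definition of the order on $\mathcal{S}(L)$. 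Combined with monotonicity of $\nu_L$ this gives $S_1\le S_2\iff\bigvee S_1\le\bigvee S_2$, and an order-reflecting monotone surjection between posets is automatically an order isomorphism (injectivity following from antisymmetry, or alternatively directly from Lemma~\ref{lem:unique_separation}), with inverse the assignment $x\mapsto x^\ast$, which is well defined by local connectedness.

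The implication $(2)\Rightarrow(3)$ is trivial, since isomorphisms are surjective. For $(3)\Rightarrow(1)$ it suffices to note that if $\nu_L$ is surjective then every $x\in L$ is the join of a separated set of connected elements, hence in particular a join of connected elements, which is exactly the definition of local connectedness.

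I do not anticipate a genuine obstacle here: the only subtlety is recognizing that a monotone bijection of posets need not be an order isomorphism, so the order-reflecting step — which is precisely where connectedness, via (E4), is used — must be carried out explicitly. Everything else is an assembly of Lemmas~\ref{lemB} and~\ref{lem:unique_separation}.
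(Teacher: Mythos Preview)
Your proposal is correct and follows essentially the same route as the paper: the implications $(2)\Rightarrow(3)$ and $(3)\Rightarrow(1)$ are declared obvious, and $(1)\Rightarrow(2)$ is obtained by combining surjectivity from Lemma~\ref{lemB} with exactly the order-reflection argument you give (each $s\in S_1$ lies below some $t\in S_2$ by (E4)). Your additional remarks about injectivity via Lemma~\ref{lem:unique_separation} and the inverse $x\mapsto x^\ast$ are compatible elaborations, not departures.
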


\begin{proof} The implications
(2)$\Rightarrow$(3) and (3)$\Rightarrow$(1) are obvious. We prove that $\nu_L$ is an isomorphism for any locally connected lattice $L$. For this, it is sufficient to prove that $\nu_L$ is surjective and reflects the order. In fact, surjectivity readily follows from Lemma~\ref{lemB}. Let $S_1$ and $S_2$ be two separated sets such that $\bigvee S_1\le \bigvee S_2$. Then for each $s_1\in S_1$, we have $s_1\le \bigvee S_2$. Since each $s_1\in S_1$ is connected, this implies that for each $s_1\in S_1$ there exists $s_2\in S_2$ such that we have $s_1\le s_2$, in other words, $S_1\le S_2$.       
\end{proof}

\section{Chainmails}\label{secA}  

\begin{figure}
    \centering
    \includegraphics[width=0.98\textwidth]{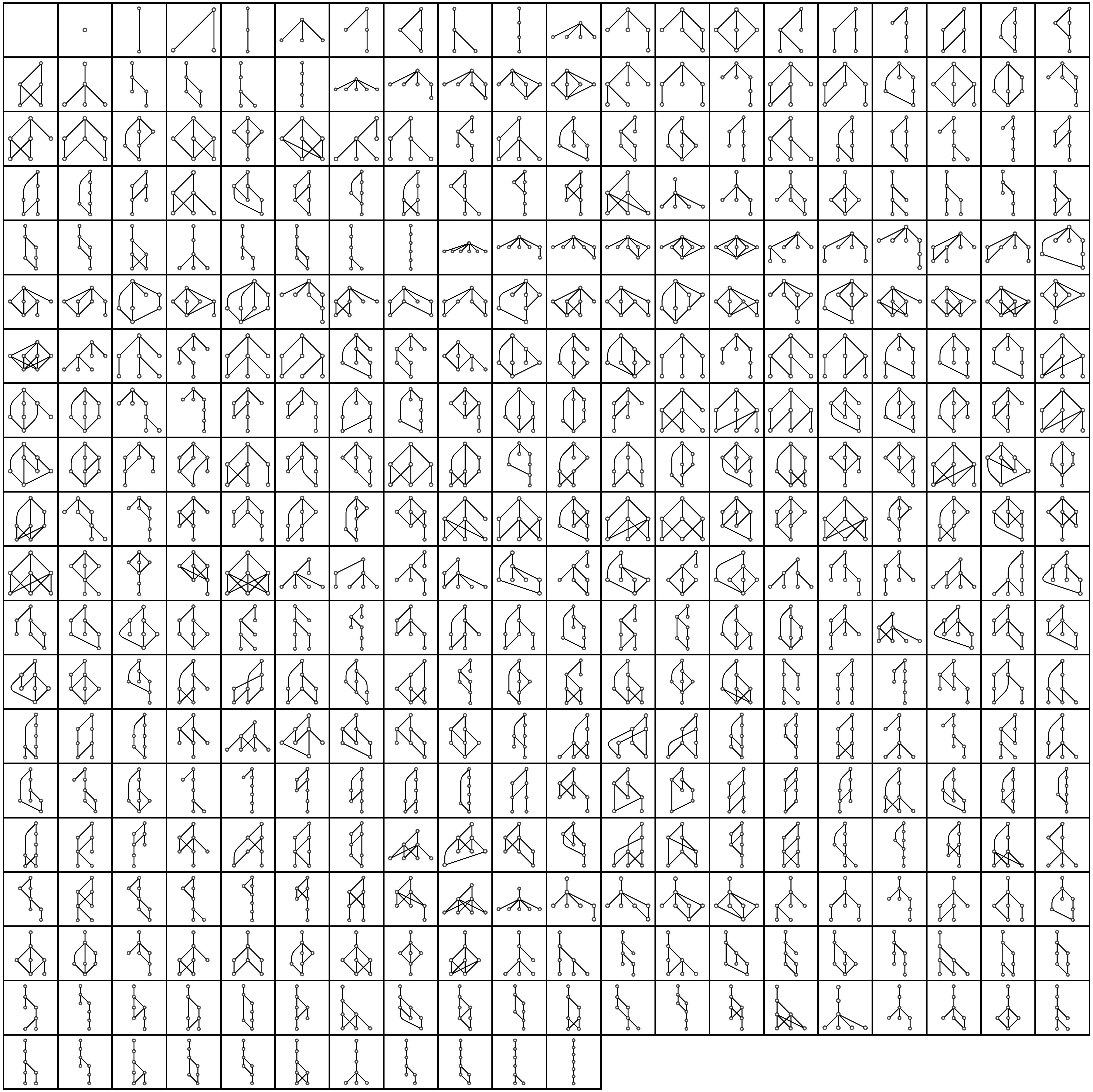}
    \caption{All mail-connected (single component) chainmails with at most 7 elements.}
    \label{fig:chainmail_table}
\end{figure}

\begin{definition}\label{def:Chainmail}
In a poset, a \emph{mail} is a non-empty set of elements of the poset having a common lower bound. A \emph{chainmail} is a poset in which every mail has a join.
\end{definition}

Intuitively, in a space-like mathematical structure, a connected region is one that cannot be `cleanly parted' into two non-empty regions. Suppose two connected regions have a shared connected sub-region. Any way of parting the combination of these two regions has to part at least one of the regions, or their shared connected region. Therefore, the region obtained by combining the two regions is also connected. This intuition, which is embodied in Definition~\ref{def:Chainmail}, covers essentially all standard notions of connectedness as we will see in the examples below. 

\begin{example}\label{exaB}
A subset $C$ of a graph is connected when it is not empty and for any two elements in the subset, there is a path of edges connecting them, where all the vertices of the path are in $C$. Connected sets of a graph form a poset under subset inclusion. Now, a mail in this poset is a set of connected sets whose intersection contains a connected set, and hence at least one vertex $v$. The union of this set is then connected, since any two elements in the union can be connected with a path passing through $v$. This implies that the poset of connected subsets in a graph is a chainmail.
\end{example}

\begin{example}
Connectivity in a graph generalises to connectivity in a `hypergraph', i.e., a set $H$ equipped with a distinguished set $E$ of subsets called `hyperedges'. Define a non-empty subset $C$ of $H$ to be connected, where for any two elements $x,y\in C$ there is a sequence of hyperedges $E_0,\dots,E_n\in E$ such that $x\in E_0$, $y\in E_n$, any two consecutive hyperedges in the sequence have nonempty intersection, and each hyperedge is a subset of $C$. The poset of connected sets is then a chainmail. To get connectivity in a graph, simply consider the hypergraph where the hyperedges are the singletons and two-element sets of vertices connected by an edge.    
\end{example}

\begin{example}\label{exaC}
The poset of connected subsets of a topological space is a chainmail. This follows from the fact that the union of a mail of connected sets is connected. Indeed, suppose the union is a subset of the union of two disjoint open sets $A$ and $B$. Since there is a connected set which is a subset of every element of the mail, there is a point $x$ which is contained in every element of the mail. This point belongs either to $A$ or $B$. If it belongs to $A$, then every element of the mail will be forced to belong to $A$, by their connectivity. So the union of the elements of the mail belongs to $A$. Similarly, for $B$.
\end{example}

\begin{example}\label{exaD}
The union of a mail of path-connected subsets of a topological space will be path-connected, since any two points in the union can be joined by a path that passes through an element of the lower bound of elements of the mail. Thus, path-connected sets in a topological space form a chainmail.
\end{example}

A \emph{connectivity space} in the sense of \cite{stadler_stadler_2015} is a pair $(X,\mathcal{C})$ where $X$ is a set and $\mathcal{C}\subseteq\powerset{X}$ is a set of subsets of $X$ such that:
\begin{enumerate}
\item[(c0)] $\varnothing\in\mathcal{C}$,
\item[(c1)] $Z\subseteq\mathcal{C}$ and $\bigcap Z\neq\varnothing$ implies $\bigcup Z\in\mathcal{C}$.
\end{enumerate}
The poset of non-empty elements of $\mathcal{C}$, which we call \emph{connected sets} of the connectivity space, is a chainmail: axiom (c1) implies that the union of a mail of connected sets is connected, and hence it is the join of the mail in the poset of connected sets. All previous examples of chainmails arise in this way from suitable connectivity spaces: in each case, take $X$ to be the underling set of the space-like structure and let $\mathcal{C}$ be the set of connected subsets along with the empty set. One may be tempted to think that every chainmail can be obtained in this way from a connectivity space. With the next example, we show that this is not the case.

\begin{example}\label{exaA}
Consider the following poset:
\[\vcenter{\hbox{\begin{tikzpicture}[
    dot/.style={circle, draw=black, fill=black, minimum size=0em, inner sep=0.1em}]
    
    \node[dot,label=right:1] (1) {};
    \node[dot,label=right:3] (3) [above=of 1]{};
    \node[dot,label=right:2] (2) [left=of 3]{};
    \node[dot,label=right:4] (4) [right=of 3]{};
    \node[dot,label=right:5] (5) [above=of 3]{};
    \node[dot,label=right:6] (6) [above=of 4]{};
    \node[dot,label=right:7] (7) [above=of 5]{};
    
    \draw[-] (1)--(2)--(5)--(7);
    \draw[-] (1)--(3)--(5);
    \draw[-] (3)--(6)--(7);
    \draw[-] (4)--(5);
    \draw[-] (4)--(6);
    \end{tikzpicture}}}\]
Since taking away an element from a set of elements in a poset which is below another element in the same set does not effect the join of the set, to check that the poset above is a chainmail it is sufficient to check that joins exist for all \emph{reduced mails}, i.e., mails where no two elements are comparable. Apart from the singletons and mails containing the top element, in which case joins trivially exist, such mails along with their joins are: \[\bigvee \{2,3\}=5, \bigvee \{2,6\}=7, \bigvee \{5,6\}=7.\]
Thus, the poset above is indeed a chainmail. This chainmail cannot be a chainmail of connected sets in any connectivity space. Indeed, the connected set $4$, being a subset of the union of $2$ and $3$, must contain an element from $3$ (since otherwise it would have been a subset of $2$). But then, $3$ and $4$ must have a join in the chainmail, which they do not.
\end{example}

The following graph structure on a poset will be useful: an edge between two distinct elements $c,d$ represents the fact that the set $\{c,d\}$ is a mail (or equivalently, is a subset of a mail). Subsets of the poset which are connected for this graph structure will be called \emph{mail-connected sets} of the poset (as in the Introduction).
Thus, a set $C$ of elements of a poset is mail-connected if it is not empty and for any two elements $c,d$ in the set, there is a sequence $c=x_0,x_1,\dots,x_n=d$ in the set such that $x_{i-1},x_i$ have a common lower bound (not necessarily in $C$), for each $i\in\{1,\dots,n\}$. Such sequence is called a 
\emph{path} of length $n$, corresponding to a usual path in the graph described above. 

\begin{theorem}\label{thmB} In a chainmail, every mail-connected set has a join.
\end{theorem}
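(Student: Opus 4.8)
The plan is to bootstrap the defining property of a chainmail (joins of mails exist) along paths. Fix an element $c_0$ of the given mail-connected set $C$. The first step is a lemma: for any path $x_0,x_1,\dots,x_n$ in $C$, the join $x_0\vee\dots\vee x_n$ exists. I would prove this by induction on $n$, the crux being that lower bounds propagate upward: if $p$ is a common lower bound of $\{u,v\}$ and $u\le w$, then $p$ is a common lower bound of $\{w,v\}$. Concretely, assuming $m=x_0\vee\dots\vee x_{n-1}$ exists, pick a common lower bound $p$ of the mail $\{x_{n-1},x_n\}$; since $p\le x_{n-1}\le m$, the set $\{m,x_n\}$ is again a mail, so its join exists, and one checks routinely that it equals $x_0\vee\dots\vee x_n$.

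The second step is to package these partial joins. Let $V$ be the set of all elements of the form $\bigvee P$, where $P$ ranges over paths in $C$ that start at $c_0$ (including the length-$0$ path $c_0$ itself). Since every such $P$ has $c_0$ among its terms, $c_0\le\bigvee P$ for every $P$; hence $c_0$ is a common lower bound of $V$, and $V$ is non-empty, so $V$ is a mail. By the chainmail hypothesis, $\bigvee V$ exists.

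The final step is to verify $\bigvee V=\bigvee C$. For each $c\in C$, mail-connectedness of $C$ provides a path from $c_0$ to $c$; its join lies in $V$ and dominates $c$, so $c\le\bigvee V$, showing $\bigvee V$ is an upper bound of $C$. Conversely, if $u$ is any upper bound of $C$, then for every path $P$ in $C$ all terms of $P$ lie in $C$ and hence below $u$, so $\bigvee P\le u$; thus $u$ is an upper bound of $V$ and $\bigvee V\le u$. Hence $\bigvee V$ is the least upper bound of $C$.

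I do not anticipate a genuine obstacle: the only substantive idea is the upward propagation of lower bounds, which simultaneously drives the inductive construction of path-joins and makes the set $V$ of path-joins a mail; everything else is a routine least-upper-bound check. The minor points to keep an eye on are the degenerate cases (the length-$0$ path, and pairs $\{m,x_n\}$ with $m=x_n$), but these cause no trouble since singletons are mails.
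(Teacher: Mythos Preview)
Your proof is correct and follows essentially the same two-step strategy as the paper: first show that every path has a join, then observe that the set of joins of paths based at a fixed element $c_0$ is itself a mail (with lower bound $c_0$) whose join is the join of $C$. The only cosmetic difference is that the paper builds the join of a path by a layered ``hierarchical'' construction (joining adjacent pairs, then adjacent pairs of those, etc.), whereas your linear induction $m\mapsto m\vee x_n$ is a slightly cleaner way to reach the same conclusion.
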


\begin{proof} In a chainmail, every path has a join. It can be obtained by a hierarchically joining mails, as illustrated in the case of a path of length four:
\[
\begin{tikzpicture}[node distance=0.7cm]

% Bottom row (five elements)
\node (a1) {$a_0$};
\node[right=of a1] (a2) {$a_1$};
\node[right=of a2] (a3) {$a_2$};
\node[right=of a3] (a4) {$a_3$};
\node[right=of a4] (a5) {$a_4$};

% Second row (joins of consecutive elements of the first row)
\node[above=of $(a1)!0.5!(a2)$] (b1) {$b_0$};
\node[above=of $(a2)!0.5!(a3)$] (b2) {$b_1$};
\node[above=of $(a3)!0.5!(a4)$] (b3) {$b_2$};
\node[above=of $(a4)!0.5!(a5)$] (b4) {$b_3$};

% Draw lines for the second row
\draw (a1) -- (b1) -- (a2);
\draw (a2) -- (b2) -- (a3);
\draw (a3) -- (b3) -- (a4);
\draw (a4) -- (b4) -- (a5);

% Third row (joins of consecutive elements of the second row)
\node[above=of $(b1)!0.5!(b2)$] (c1) {$c_0$};
\node[above=of $(b2)!0.5!(b3)$] (c2) {$c_1$};
\node[above=of $(b3)!0.5!(b4)$] (c3) {$c_2$};

% Draw lines for the third row
\draw (b1) -- (c1) -- (b2);
\draw (b2) -- (c2) -- (b3);
\draw (b3) -- (c3) -- (b4);

% Fourth row (join of consecutive elements of the third row)
\node[above=of $(c1)!0.5!(c2)$] (d1) {$d_0$};
\node[above=of $(c2)!0.5!(c3)$] (d2) {$d_1$};

% Draw lines for the fourth row
\draw (c1) -- (d1) -- (c2);
\draw (c2) -- (d2) -- (c3);

% Top element (join of elements of the fourth row)
\node[above=of $(d1)!0.5!(d2)$] (top) {$e$};

% Draw lines to the top element
\draw (d1) -- (top);
\draw (d2) -- (top);

\end{tikzpicture}
\]
Consider a mail-connected set $C$ and pick any element $c\in C$. For each $d\in C$ there is a path that connects $c$ with $d$. Consider the set of all joins of such chains. It is a mail whose join will be the join of $C$.
\end{proof}

\begin{corollary}
A poset is a chainmail if and only if it satisfies (C).
\end{corollary}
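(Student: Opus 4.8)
The plan is to read off both implications from what has already been established, so the proof is essentially immediate. The forward direction, that a chainmail satisfies (C), is precisely the content of Theorem~\ref{thmB}: it asserts that in a chainmail every mail-connected set has a join, which is exactly condition (C). So no additional argument is needed in that direction.

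For the converse, suppose a poset $P$ satisfies (C); I must show that $P$ is a chainmail, i.e., that every mail of $P$ has a join. The key observation is that every mail is already mail-connected. Indeed, if $M$ is a mail with a common lower bound $\ell$, then for any two elements $c,d\in M$ the pair $\{c,d\}$ also has $\ell$ as a lower bound, so $\{c,d\}$ is a mail and hence $c,d$ is a path of length one in $M$ joining $c$ to $d$; combined with $M\neq\varnothing$, this shows $M$ is mail-connected. Applying (C) to $M$ then produces a join of $M$, which is what we wanted.

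The only thing to be careful about is bookkeeping around the definition of a path: the intermediate common lower bounds need not lie in the set $C$ itself, which is exactly the formulation fixed just before Theorem~\ref{thmB}; with that convention in place the pair $c,d$ really does count as a path inside $M$. There is no genuine obstacle here — the statement is a direct packaging of Theorem~\ref{thmB} together with the remark that mails are mail-connected. Hence a poset is a chainmail if and only if it satisfies (C).
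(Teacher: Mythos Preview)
Your proposal is correct and matches the paper's intended argument: the corollary is stated without proof immediately after Theorem~\ref{thmB}, precisely because one direction is Theorem~\ref{thmB} and the other is the trivial observation that every mail is mail-connected. Your write-up spells this out carefully and there is nothing to add.
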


Every mail-connected set in the poset $\mathcal{K}(L)$ of connected elements of a complete lattice $L$ is a chained set in $L$, and so, it follows from Lemma~\ref{lemC} that $\mathcal{K}(L)$ is always a  chainmail, where joins of mail-connected sets are given by joins in $L$. When the lattice $L$ is locally connected, mail-connected sets in $\mathcal{K}(L)$ are the same as chained sets of connected elements in $L$; in fact, for this it is sufficient to require that every element in $L$ that is different from $0$ has a connected element below it --- we say in this case that $L$ has a \emph{connective foundation}. 

A set $S$ of elements of a poset is said to be \emph{totally disconnected} if no two distinct elements of $S$ have a common lower bound, or equivalently, its mail-connected subsets are only the singletons. Every singleton is both mail-connected and totally disconnected, and moreover, singletons are the only such sets.
. For a chainmail $\Gamma$, we write $\mathcal{D}(\Gamma)$ to denote the poset of totally disconnected sets in $\Gamma$: $D_1\le D_2$ for two totally disconnected sets $D_1$ and $D_2$, whenever each element of $D_1$ is below some element of $D_2$.

For a complete lattice $L$, every separated set of connected elements in $L$ will be a totally disconnected set in $\mathcal{K}(L)$. When $L$ has a connective foundation, the converse is also true: every totally disconnected set in $\mathcal{K}(L)$ is a separated set of connected elements in $L$. So in this case, $\mathcal{S}(L)=\mathcal{D}\mathcal{K}(L)$.  

Using the theorem below, it turns out that $\mathcal{D}(\Gamma)$ is a complete lattice for any chainmail $\Gamma$.

\begin{theorem}\label{thmF} For a down-closed set $X$ in a chainmail $\Gamma$, the following conditions are equivalent:
\begin{enumerate}
\item $X$ is a down-set of a totally disconnected set in $\Gamma$.

\item The join of every subset of $X$ that is a mail belongs to $X$.

\item $X$ is closed under joins of mail-connected subsets of $X$.
\end{enumerate}
Furthermore, the poset $\mathcal{D}(\Gamma)$ is isomorphic to the poset of those down-closed sets $X$ in $\Gamma$ which satisfy the equivalent conditions above. The isomorphism is given by mapping a totally disconnected set to its down-set, and backwards, by mapping a down-closed set $X$ satisfying the equivalent conditions above to the set $X^\ast$ of joins of maximal mail-connected subsets of $X$.
\end{theorem}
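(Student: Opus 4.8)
The plan is to prove the three conditions equivalent via the cycle $(1)\Rightarrow(2)\Rightarrow(3)\Rightarrow(1)$, and then to extract the isomorphism from the bijective correspondence between totally disconnected sets and down-closed sets that this cycle produces. The implication $(1)\Rightarrow(2)$ is immediate: if $X=\downclose D$ with $D$ totally disconnected and $M\subseteq X$ is a mail with common lower bound $\ell$, then each $m\in M$ satisfies $m\le d_m$ for some $d_m\in D$; since $\ell$ is a common lower bound of any two such $d_m$, total disconnectedness forces them all to equal one element $d\in D$, so $\bigvee M\le d$ and $\bigvee M\in X$.

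The implication $(2)\Rightarrow(3)$ is the technical heart, and I would obtain it by re-running the argument of Theorem~\ref{thmB} while tracking membership in $X$. Given a mail-connected $C\subseteq X$ and a fixed $c\in C$, for each $d\in C$ choose a path $c=x_0,\dots,x_n=d$ inside $C$ and collapse it by the hierarchical joining depicted in that proof, level $k$ consisting of the joins $x_i\vee\cdots\vee x_{i+k}$. At each stage the two elements being joined have a common lower bound (two consecutive path vertices at the ground level, or, higher up, two joins sharing an original vertex), so they form a mail; and by induction both already lie in $X$, whence $(2)$ places their join in $X$. Thus the whole path collapses to an element $p_d\in X$ with $c\le p_d$; the family $\{p_d:d\in C\}$ is itself a mail contained in $X$, with common lower bound $c$, so $(2)$ puts its join --- which equals $\bigvee C$ --- into $X$.

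For $(3)\Rightarrow(1)$ I would decompose $X$ into components: the relation ``connected by a path lying in $X$'' is an equivalence relation on $X$ whose classes are exactly the maximal mail-connected subsets $C_i$. Condition $(3)$ gives $\bigvee C_i\in X$, and since $\{c,\bigvee C_i\}$ is a mail for every $c\in C_i$, the element $\bigvee C_i$ lies in its own class $C_i$ and is the greatest element there. Put $D=X^\ast=\{\bigvee C_i\}_i$. A common lower bound of two distinct elements $\bigvee C_i,\bigvee C_j$ would make $\{\bigvee C_i,\bigvee C_j\}$ a mail bridging two components, which is impossible, so $D$ is totally disconnected; and $X=\downclose D$ because $X$ is down-closed with every $\bigvee C_i\in X$, while conversely each $x\in X$ lies below $\bigvee C_i$ for its own component.

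For the isomorphism, the assignment $\Phi\colon D\mapsto\downclose D$ sends $\mathcal{D}(\Gamma)$ into the poset (ordered by inclusion) of down-closed sets satisfying the equivalent conditions; it is an order-embedding since $D_1\le D_2$ in $\mathcal{D}(\Gamma)$ holds exactly when $\downclose D_1\subseteq\downclose D_2$, and it is onto by $(1)$. The computation in $(3)\Rightarrow(1)$ gives $\downclose(X^\ast)=X$; dually, the components of $\downclose D$ are precisely the principal down-sets $\downclose d$ for $d\in D$ --- each is mail-connected via paths $x,d,y$ through $d$, and pairwise disjoint and maximal by total disconnectedness --- so $(\downclose D)^\ast=D$. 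Hence $X\mapsto X^\ast$ is a two-sided inverse of $\Phi$, both maps are monotone, and $\Phi$ is the asserted isomorphism. I expect the one genuine obstacle to be the bookkeeping in $(2)\Rightarrow(3)$ --- ensuring no intermediate join leaves $X$ --- while everything else is routine handling of down-sets, components, and common lower bounds.
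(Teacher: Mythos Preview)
Your proof is correct and follows essentially the same route as the paper's: the same cycle $(1)\Rightarrow(2)\Rightarrow(3)\Rightarrow(1)$, with $(2)\Rightarrow(3)$ obtained by rerunning the hierarchical-joining argument of Theorem~\ref{thmB} inside $X$, and $(3)\Rightarrow(1)$ via maximal mail-connected components; the paper merely sketches these steps, whereas you have written out the details (including the verification that $(\downclose D)^\ast=D$ and that $\Phi$ is an order-embedding) that the paper leaves to the reader.
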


\begin{proof}
(1)$\Rightarrow$(2): Suppose $X$ is a down-set of totally disconnected set $D$ in $\Gamma$. Consider a subset $M$ of $X$ that is a mail, and let $q$ be a lower bound of $M$. Each element of $M$ must be below one of the elements in $D$. This would force $q$ to be below all such elements of $D$, and so those must be the same element since $D$ is totally disconnected. The join of $M$ will then also be below that element, which means that it is contained in $X$.

(2)$\Rightarrow$(3): This can be proved by adapting the argument from the proof of Theorem~\ref{thmB}.

(3)$\Rightarrow$(1): Suppose $X$ is as in (3). Consider the maximal mail-connected subsets of $X$. The join of each such subset is then the top element in it. Furthermore, the set $D$ of these joins must be totally disconnected. $X$ is then the down-set of $D$.

The last two statements of the theorem follow easily from the reduction just described of an $X$ satisfying (3) into a separated set.
\end{proof}

Notice that down-closed subsets $X$ of a chainmail satisfying the equivalent conditions will be chainmails in their own right. We will henceforth refer to them simply as \emph{subchainmails}. Thus, by the theorem above, $\mathcal{D}(\Gamma)$ is isomorphic to the poset of subchainmails of $\Gamma$. In the future, having in mind this isomorphism, we will often work in the latter poset when we want to establish properties of $\mathcal{D}(\Gamma)$. The first illustration of this is the following. Condition (2) in the theorem above makes it particularly evident that arbitrary intersection of subchainmails is a subchainmail. We then obtain:

\begin{corollary}
For any chainmail $\Gamma$, the poset $\mathcal{D}(\Gamma)$ is a complete lattice.
\end{corollary}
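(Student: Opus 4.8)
The plan is to deduce this from Theorem~\ref{thmF} together with the standard fact that a poset is a complete lattice as soon as it has arbitrary meets (equivalently, arbitrary joins), the point being that the subchainmails of $\Gamma$, as a family of subsets of $\Gamma$, are closed under arbitrary intersection. So the first step is to record the identification already made above: by Theorem~\ref{thmF}, $\mathcal{D}(\Gamma)$ is isomorphic, as a poset, to the poset $\mathcal{P}$ of subchainmails of $\Gamma$ ordered by inclusion; hence it suffices to show $\mathcal{P}$ is a complete lattice. Here the order on $\mathcal{D}(\Gamma)$, which is defined by "each element of $D_1$ lies below some element of $D_2$", corresponds under the isomorphism to ordinary inclusion of the associated down-sets, so nothing is lost in passing to $\mathcal{P}$.

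Next I would verify that an arbitrary intersection $\bigcap_{i\in I} X_i$ of subchainmails $X_i$ is again a subchainmail, using characterisation (2) of Theorem~\ref{thmF}: a down-closed set $X$ is a subchainmail iff the join of every mail contained in $X$ lies in $X$. If $M$ is a mail with $M\subseteq\bigcap_i X_i$, then $M\subseteq X_i$ for every $i$, so $\bigvee M\in X_i$ for every $i$ by condition (2) applied to each $X_i$, whence $\bigvee M\in\bigcap_i X_i$; and an intersection of down-closed sets is visibly down-closed. (For $I=\varnothing$ the intersection is all of $\Gamma$, which is a subchainmail.) Thus $\mathcal{P}$ is closed under arbitrary intersections.

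Finally I would invoke the standard lemma: a poset in which every subset has an infimum is a complete lattice, the supremum of a subset $\mathcal{A}\subseteq\mathcal{P}$ being the infimum of the set of its upper bounds. Since $\mathcal{P}$ is closed under arbitrary intersection and intersection computes infima in a poset of subsets ordered by inclusion, every family of subchainmails has a meet in $\mathcal{P}$, hence also a join (namely the intersection of all subchainmails containing every member of the family; note $\Gamma$ itself is such a subchainmail, so this family of upper bounds is non-empty). Therefore $\mathcal{P}$, and with it $\mathcal{D}(\Gamma)$, is a complete lattice.

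I do not anticipate a genuine obstacle here: the corollary is essentially a bookkeeping consequence of Theorem~\ref{thmF}(2) plus the "meets give a complete lattice" principle, exactly as the paragraph preceding the statement already announces. The only point requiring a little care is to make sure the empty family is handled (so that a top element exists) and that the passage through the isomorphism of Theorem~\ref{thmF} is cited rather than re-proved, so that the proof stays as short as the result deserves.
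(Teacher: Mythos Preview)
Your proposal is correct and follows essentially the same route as the paper: identify $\mathcal{D}(\Gamma)$ with the poset of subchainmails via Theorem~\ref{thmF}, use condition~(2) to see that this family is closed under arbitrary intersection, and conclude by the standard ``closed under meets implies complete lattice'' principle. The paper states this more tersely, but your added care about the empty family and the explicit verification of closure under intersection are exactly the details one would fill in.
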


The join of subchainmails is obtained by generating a subchainmail from the union of the given subchainmails; i.e., by intersecting all subchainmails containing the given subchainmails.

The set $X^\ast$ of joins of maximal mail-connected subsets of a set $X$ can sometimes be totally disconnected and hence, its down-set be the subchainmail generated by $X$. We note the following two cases in which this occurs. Firstly, this is trivially so when $X$ is already a subchainmail. Then $X^\ast$ is the corresponding totally disconnected set whose down-set is $X$ (Theorem~\ref{thmF}). Secondly, it is so when $X$ is totally disconnected, since in that case $X^\ast=X$, and it is easy to see that the down-set of $X$ is then the subchainmail generated by $X$. This includes the case when $X$ consists of a single element and when it is empty: the subchainmail generated by a single element is its down-set, while the subchainmail generated by the empty set is the empty set.

\begin{lemma}\label{lemA}
For any chainmail $\Gamma$, a set $S$ in $\mathcal{D}(\Gamma)$ is separated if and only if it does not contain the empty set, its elements are pair-wise disjoint, and its union $\bigcup S$ is a totally disconnected set in $\Gamma$. When $S$ is separated in $\mathcal{D}(\Gamma)$, its join is given by $\bigcup S$.
\end{lemma}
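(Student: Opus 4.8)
The plan is to transport the lattice structure of $\mathcal{D}(\Gamma)$ across the isomorphism with the poset of subchainmails of $\Gamma$ provided by Theorem~\ref{thmF}, compute there when a binary meet vanishes, and then read off what ``separated'' means. First I would record three facts about this isomorphism: a totally disconnected set $D$ corresponds to its down-set ${\downarrow}D$; the empty set is itself a subchainmail and is the smallest one, so the bottom element $0$ of $\mathcal{D}(\Gamma)$ is the empty totally disconnected set; and, since intersections of subchainmails are subchainmails (condition~(2) of Theorem~\ref{thmF}), the meet of $D,D'\in\mathcal{D}(\Gamma)$ is carried to ${\downarrow}D\cap{\downarrow}D'$. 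In particular $D\wedge D'=0$ if and only if ${\downarrow}D\cap{\downarrow}D'=\varnothing$, that is, no element of $\Gamma$ lies simultaneously below an element of $D$ and below an element of $D'$.

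Next I would unwind this last condition. If $z\le d\in D$ and $z\le d'\in D'$ then $z$ witnesses ${\downarrow}D\cap{\downarrow}D'\neq\varnothing$; conversely any $z$ in this intersection gives such $d,d'$, and taking $z=d=d'$ when $D\cap D'\neq\varnothing$ shows disjointness is necessary. Hence $D\wedge D'=0$ exactly when $D\cap D'=\varnothing$ and no element of $D$ has a common lower bound with a distinct element of $D'$. Substituting into the definition of a separated family, $S\subseteq\mathcal{D}(\Gamma)$ is separated if and only if $\varnothing\notin S$, the members of $S$ are pairwise disjoint, and no element of one member has a common lower bound with a distinct element of another member. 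The only remaining point for the ``if and only if'' is that, once $\varnothing\notin S$ and the members are pairwise disjoint, this last clause is equivalent to $\bigcup S$ being totally disconnected: two distinct elements of $\bigcup S$ lying in the same member have no common lower bound because that member is totally disconnected, while two distinct elements lying in different members have no common lower bound precisely by the clause; conversely, total disconnectedness of $\bigcup S$ yields the clause directly.

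For the final sentence, assume $S$ is separated. By the equivalence just proved, $\bigcup S$ is totally disconnected, hence is a genuine element of $\mathcal{D}(\Gamma)$. It is an upper bound for $S$: for $D\in S$ and $d\in D$ we have $d\in\bigcup S$ with $d\le d$, so $D\le\bigcup S$. And it is the least upper bound: if $E\in\mathcal{D}(\Gamma)$ satisfies $D\le E$ for every $D\in S$, then any $x\in\bigcup S$ lies in some $D\in S$ and is therefore below some element of $E$, so $\bigcup S\le E$. Thus $\bigvee S=\bigcup S$.

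The argument is routine once the structure of $\mathcal{D}(\Gamma)$ is handled correctly, and that is the one place to be careful: $\mathcal{D}(\Gamma)$ inherits its meets, joins, and bottom element only through the subchainmail isomorphism, so a meet of totally disconnected sets must be computed via down-sets rather than as a naive set-intersection. Conveniently, detecting when a meet equals $0$ does not involve the reduction operation $X\mapsto X^\ast$, which keeps the meet computation clean; after that, nothing beyond unwinding definitions is needed.
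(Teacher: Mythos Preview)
The paper states this lemma without proof, so there is no argument to compare against directly; your proof correctly fills in the omitted details. Your approach---transporting the question across the isomorphism of Theorem~\ref{thmF} and computing meets as intersections of down-sets---is exactly the route the paper implicitly has in mind, since it immediately restates the lemma in the language of subchainmails as Lemma~\ref{lemE}. The one subtlety you flag (that meets in $\mathcal{D}(\Gamma)$ must be computed via down-sets, not naive intersection of totally disconnected sets) is the only real point of care, and you handle it correctly.
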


Below is a translation of the lemma above to the language of subchainmails.

\begin{lemma}\label{lemE}
A set $R$ of subchainmails of a chainmail $\Gamma$ is separated in the complete lattice of subchainmails if and only if it does not contain the empty subchainmail, its elements are pair-wise disjoint, and its union $\bigcup R$ is a subchainmail. When $R$ is separated, $\bigcup R$ is the join of $R$ in the complete lattice of subchainmails.
\end{lemma}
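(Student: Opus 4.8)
The plan is to prove Lemma~\ref{lemE} directly in the poset of subchainmails of $\Gamma$; this is the ``translation'' of Lemma~\ref{lemA} promised just before the statement, and by Theorem~\ref{thmF} the two settings are interchangeable. First I would record the lattice structure that is in play: among subchainmails the order is inclusion, the bottom element is the empty subchainmail $\varnothing$, an arbitrary meet is the intersection (as noted right after Theorem~\ref{thmF}), and the join of a family of subchainmails is the smallest subchainmail containing their union. Plugging the first three of these into the definition of a separated set, a set $R$ of subchainmails is separated precisely when $\varnothing\notin R$ and any two distinct members of $R$ are disjoint. Hence two of the three conditions in the statement are nothing more than the unpacking of ``separated'', and the equivalence reduces to the single assertion that the union of a family of pairwise disjoint subchainmails is again a subchainmail: this supplies the third condition in the forward direction, while the backward direction is then immediate.

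So the one substantive step is to show that $\bigcup R$ is a subchainmail whenever the members of $R$ are pairwise disjoint, and I would do this by verifying condition (2) of Theorem~\ref{thmF}. Down-closedness of $\bigcup R$ is clear, a union of down-closed sets being down-closed. For the closure under joins of mails, take a mail $M\subseteq\bigcup R$ with common lower bound $q$ and show $M$ lies inside a single member of $R$: if $m\in X$ and $m'\in X'$ for $X,X'\in R$, then $q\le m$ and $q\le m'$, so down-closedness of $X$ and of $X'$ gives $q\in X\cap X'$, forcing $X=X'$ by pairwise disjointness. Thus $M\subseteq X$ for some $X\in R$, and since $X$ is a subchainmail, $\bigvee M\in X\subseteq\bigcup R$; so $\bigcup R$ is a subchainmail. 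For the ``moreover'' clause: if $R$ is separated then, by what we just showed, $\bigcup R$ is already a subchainmail, so it is itself the smallest subchainmail containing $\bigcup R$, i.e.\ the join of $R$ in the complete lattice of subchainmails.

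I do not expect a genuine difficulty here; the only points calling for care are to correctly identify the bottom element and the meet of the subchainmail lattice, so that ``separated'' unpacks as claimed, and to note the degenerate cases (the empty subchainmail as bottom, and a singleton $R$ being separated iff its one member is non-empty). As an alternative one could transport Lemma~\ref{lemA} literally, setting $S=\{X^\ast:X\in R\}$ and applying that lemma to $S$ inside $\mathcal{D}(\Gamma)$; but then one has to reconcile disjointness of the totally disconnected sets $X^\ast$ with disjointness of the subchainmails $\downclose(X^\ast)=X$, and this agreement holds only together with the union condition and is established by exactly the argument of the second paragraph, so nothing is gained.
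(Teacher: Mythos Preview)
Your argument is correct. The paper does not actually supply a proof of Lemma~\ref{lemE} (nor of Lemma~\ref{lemA}); it merely introduces Lemma~\ref{lemE} as ``a translation of the lemma above to the language of subchainmails'', relying on the isomorphism of Theorem~\ref{thmF} and leaving the verification to the reader. Your direct computation---unpacking ``separated'' via $0=\varnothing$ and $\wedge=\cap$, and then checking condition~(2) of Theorem~\ref{thmF} for $\bigcup R$ by the lower-bound/down-closure/disjointness argument---is exactly the routine work the paper suppresses, and it goes through as written. Your closing remark, that transporting Lemma~\ref{lemA} through the isomorphism ultimately requires the same mail-in-one-member argument, is also accurate, so the direct route you chose is the cleaner one.
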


The lemmas above lead us to the following result:

\begin{theorem}\label{thmH} For any chainmail $\Gamma$, connected elements in the complete lattice $\mathcal{D}(\Gamma)$ are singletons, and consequently, the complete lattice $\mathcal{D}(\Gamma)$ is locally connected and moreover, $\Gamma$ is isomorphic to $\mathcal{K}\mathcal{D}(\Gamma)$. This isomorphism maps each element $x$ of $\Gamma$ to the singleton $\{x\}$.
\end{theorem}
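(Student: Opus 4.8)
The plan is to prove the three assertions in order: first that connected elements of $\mathcal{D}(\Gamma)$ are exactly the singletons, then local connectedness, then the isomorphism $\Gamma\cong\mathcal{K}\mathcal{D}(\Gamma)$.

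\textbf{Step 1: connected elements are singletons.} Working in the isomorphic model of subchainmails (Theorem~\ref{thmF}), I would first show every singleton $\{x\}^\downarrow$ (the subchainmail generated by a single element $x$) is connected, and then show nothing else is. For the first part, suppose a singleton-generated subchainmail $D$ satisfies $D\le\bigvee R$ for a separated set $R$ of subchainmails; by Lemma~\ref{lemE} the join is $\bigcup R$, so $x$ lies in some member of $R$, whence $D\le$ that member. Thus $\{x\}^\downarrow$ satisfies (E4) and is connected. Conversely, suppose a subchainmail $D$ is connected but is not generated by a single element. Using Theorem~\ref{thmF}, $D$ is the down-set of a totally disconnected set $D^\ast$ of size $\ge 2$ (if $D^\ast$ were empty, $D=\varnothing$, which is not connected since $0\notin S$ fails for any separated set witnessing connectivity — actually the empty subchainmail is the bottom, hence not connected). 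Pick two distinct elements $a,b\in D^\ast$. Then $\{a^\downarrow\mid a\in D^\ast\}$ is a family of subchainmails that is pairwise disjoint (total disconnectedness of $D^\ast$) and whose union is $D$, a subchainmail; by Lemma~\ref{lemE} this is a separated set with join $D$. Since $|D^\ast|\ge 2$, this is a nontrivial separated decomposition of $D$, contradicting (E3), hence $D$ is not connected. So the connected elements are precisely the singleton-generated subchainmails.

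\textbf{Step 2: local connectedness.} By Theorem~\ref{thmF}, every subchainmail $D$ equals the down-set of $D^\ast$, which by Lemma~\ref{lemE} (pairwise disjoint, union a subchainmail) is the join in $\mathcal{D}(\Gamma)$ of the separated family $\{a^\downarrow\mid a\in D^\ast\}$ of connected elements. Hence every element of $\mathcal{D}(\Gamma)$ is a join of connected elements, so $\mathcal{D}(\Gamma)$ is locally connected.

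\textbf{Step 3: the isomorphism.} Define $\eta_\Gamma\colon\Gamma\to\mathcal{K}\mathcal{D}(\Gamma)$ by $x\mapsto\{x\}$ (equivalently, $x\mapsto x^\downarrow$, a connected element of $\mathcal{D}(\Gamma)$ by Step 1). Monotonicity is clear: $x\le y$ in $\Gamma$ implies $x^\downarrow\subseteq y^\downarrow$. Surjectivity is exactly Step 1: every connected element of $\mathcal{D}(\Gamma)$ is of the form $x^\downarrow$. For order-reflection, if $x^\downarrow\subseteq y^\downarrow$ then $x\in y^\downarrow$, i.e. $x\le y$; in particular $\eta_\Gamma$ is injective. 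A monotone bijection that reflects the order is an order isomorphism, completing the proof. The main obstacle I anticipate is Step 1's converse direction — carefully ruling out non-singleton connected subchainmails — where one must correctly invoke Lemma~\ref{lemE} to manufacture a nontrivial separated family from the totally disconnected set $D^\ast$, and handle the degenerate case of the empty subchainmail (which is the bottom element, hence by convention not connected).
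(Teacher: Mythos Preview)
Your proposal is correct and follows precisely the route the paper intends: the paper states this theorem without proof, merely pointing to Lemmas~\ref{lemA} and~\ref{lemE} (and implicitly Theorem~\ref{thmF}) as the ingredients, and your three steps fill in exactly those details. The only cosmetic issue is the slightly tangled parenthetical handling of the empty subchainmail in Step~1; it suffices to observe that the empty subchainmail is the bottom element and hence fails (E3) with $S=\varnothing$.
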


Combine this theorem with Theorem~\ref{thmD} and the observation that $\mathcal{S}(L)=\mathcal{D}\mathcal{K}(L)$ as soon as $L$ has a connective foundation (and in particular, when $L$ is a locally connected lattice). We obtain:

\begin{corollary}\label{corA} There is a one-to-one correspondence between isomorphism classes of locally connected lattices and isomorphism classes of chainmails. In the forward direction, it is given by mapping a locally connected lattice $L$ to the chainmail $\mathcal{K}(L)$ of its connected elements, while in the backward direction it is given by mapping a chainmail $\Gamma$ to the poset $\mathcal{D}(\Gamma)$ of its totally disconnected sets.
\end{corollary}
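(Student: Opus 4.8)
The plan is to obtain the correspondence by assembling the two halves already established, exactly as one derives an equivalence of categories from an adjunction by restricting to the subcategories on which the unit and counit are invertible.

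First I would check that both assignments are well defined on isomorphism classes. For any complete lattice $L$, the poset $\mathcal{K}(L)$ is a chainmail --- this was observed just after Theorem~\ref{thmB}, with joins of mail-connected sets computed as joins in $L$ via Lemma~\ref{lemC} --- and any lattice isomorphism $L\cong L'$ restricts to a poset isomorphism $\mathcal{K}(L)\cong\mathcal{K}(L')$. Conversely, Theorem~\ref{thmH} shows that $\mathcal{D}(\Gamma)$ is a locally connected lattice for every chainmail $\Gamma$, and an isomorphism of chainmails induces an isomorphism of the associated posets of totally disconnected sets, hence of the lattices $\mathcal{D}(\Gamma)$.

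Next I would verify that the two round trips are identities on isomorphism classes. For a chainmail $\Gamma$, the last sentence of Theorem~\ref{thmH} already supplies a canonical isomorphism $\Gamma\cong\mathcal{K}\mathcal{D}(\Gamma)$, $x\mapsto\{x\}$. For a locally connected lattice $L$, I would first note that $L$ has a connective foundation: each nonzero element of $L$ is a join of connected elements, and since the empty join is $0$, at least one of those connected elements is a genuine (nonzero) connected element below it. Therefore, by the observation recorded just before the statement, $\mathcal{D}\mathcal{K}(L)$ coincides with $\mathcal{S}(L)$, the poset of separated sets of connected elements of $L$. Finally, $\nu_L\colon\mathcal{S}(L)\to L$ is an isomorphism by Theorem~\ref{thmD}, because $L$ is locally connected; hence $\mathcal{D}\mathcal{K}(L)\cong L$, which closes the loop.

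The only point requiring care --- and the place I would expect the bulk of the (routine) work --- is the identification $\mathcal{D}\mathcal{K}(L)=\mathcal{S}(L)$: one must check that a totally disconnected set in $\mathcal{K}(L)$ is exactly a separated set of connected elements of $L$ (the connective foundation is what rules out $0$ appearing) and that the order on totally disconnected sets of $\mathcal{K}(L)$ matches the order on separated sets in $\mathcal{S}(L)$. Both reduce to unwinding the relevant definitions, so there is no genuine obstacle; the content of the corollary is carried entirely by Theorems~\ref{thmD} and~\ref{thmH} together with this bookkeeping.
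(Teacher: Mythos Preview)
Your proposal is correct and follows exactly the route the paper takes: combine Theorem~\ref{thmH} (giving $\Gamma\cong\mathcal{K}\mathcal{D}(\Gamma)$ and local connectedness of $\mathcal{D}(\Gamma)$) with Theorem~\ref{thmD} and the identification $\mathcal{S}(L)=\mathcal{D}\mathcal{K}(L)$ for $L$ with connective foundation. You are simply more explicit than the paper about well-definedness on isomorphism classes and about why a locally connected lattice has a connective foundation, but the substance is identical.
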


In \cite{upcoming}, we will show that we can characterise the relevant chainmails when we limit this correspondence to frames instead of general complete lattices.

\section{The category of chainmails}

In this section we show that the one-to-one correspondence between chainmails and locally connected lattices obtained in the previous section is in fact an equivalence of categories, and moreover, this equivalence arises as the largest equivalence for an adjunction between suitable categories of complete lattices and chainmails. In fact, we will see that the $\mathcal{D}$ construction allows us to view the category of chainmails as a full mono co-reflective subcategory of the category of complete lattices. Locally connected lattices are just objects in the bigger category that are isomorphic to those in the subcategory.   

Note that any monotone map between posets preserves both mails and mail-connected sets. The definition of a chainmail suggests an obvious notion of a morphism between chainmails:

\begin{definition}
A \emph{chainmail morphism} $\Gamma_1\to \Gamma_2$ is a map $\Gamma_1\to \Gamma_2$ between chainmails $\Gamma_1$ and $\Gamma_2$, which preserves joins of mails.
\end{definition}

Note that chainmails morphisms are monotone.

\begin{example}
A continuous function between topological spaces preserves connectivity and so it gives rise to a chainmail morphism between the corresponding chainmails of connected sets, which maps a connected set in the domain to its direct image in the codomain. The same is true for path-connectivity.    
\end{example}

An argument similar to the one used in the proof of Theorem~\ref{thmB} allows one to prove the following:

\begin{theorem}
Every chainmail morphism preserves joins of mail-connected sets.
\end{theorem}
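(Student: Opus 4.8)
The plan is to mimic the hierarchical-joining argument of Theorem~\ref{thmB}, but now tracking what a chainmail morphism $f\colon\Gamma_1\to\Gamma_2$ does at each stage. First I would recall that $f$, being join-of-mails preserving, is monotone, so it sends mails to mails and mail-connected sets to mail-connected sets; in particular the image $f[C]$ of a mail-connected set $C$ is mail-connected in $\Gamma_2$ and hence has a join by Theorem~\ref{thmB}. The goal is to show $f\left(\bigvee C\right)=\bigvee f[C]$.

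The key step is to handle paths first. Given a path $a_0,a_1,\dots,a_n$ in $\Gamma_1$, Theorem~\ref{thmB} builds $\bigvee\{a_0,\dots,a_n\}$ by the binary tree of mail-joins pictured there: choose lower bounds $b_i$ of $\{a_i,a_{i+1}\}$, form $a_i\vee a_{i+1}$, then note consecutive such joins share the lower bound $a_{i+1}$ so they form a mail, and iterate. Applying $f$ and using that $f$ preserves each individual mail-join at every level of the tree, one gets $f\left(\bigvee\{a_0,\dots,a_n\}\right)=\bigvee\{f(a_0),\dots,f(a_n)\}$ by a straightforward induction on the height of the tree (equivalently, on $n$): the base case is a single mail $\{a_0,a_1\}$, and the inductive step splits the path, joins the two halves, and observes the two partial joins still have a common lower bound that $f$ sends to a common lower bound. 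Then I would pass to a general mail-connected set $C$: fix $c\in C$, and for each $d\in C$ pick a path $p_d$ from $c$ to $d$; the joins $\bigvee p_d$ form a mail (all lie above $c$), and $\bigvee C=\bigvee_{d}\bigvee p_d$. Since $f$ preserves this outer mail-join and preserves each $\bigvee p_d$ by the path case, we obtain $f\left(\bigvee C\right)=\bigvee_d\bigvee f[p_d]=\bigvee f[C]$, where the last equality holds because every $f[p_d]$ is contained in $f[C]$ and $f[C]=\bigcup_d f[p_d]$.

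The main obstacle, and the place to be careful, is verifying that the intermediate joins really do assemble into mails in $\Gamma_2$ at each stage — i.e.\ that a common lower bound in $\Gamma_1$ of two elements of a path maps under $f$ to a common lower bound of their $f$-images, which is just monotonicity, but one must make sure the \emph{same} element witnesses adjacency after applying $f$ so that $f$ of a level-$k$ mail is literally a mail and $f$ of its join is the join of the image. This is routine given monotonicity and the fact that $f$ preserves joins of mails by hypothesis; no genuinely new idea beyond the proof of Theorem~\ref{thmB} is needed, only the bookkeeping that $f$ commutes with the construction level by level.
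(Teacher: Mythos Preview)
Your proposal is correct and is precisely the approach the paper indicates: the paper does not spell out a proof but simply states that ``an argument similar to the one used in the proof of Theorem~\ref{thmB}'' suffices, and your plan of tracking $f$ through the hierarchical mail-joins for paths and then through the mail of path-joins for a general mail-connected set is exactly that adaptation. The bookkeeping you flag (monotonicity ensuring images of lower bounds remain lower bounds, so that each level of the construction is again a mail whose join $f$ preserves) is the only thing to check, and you have identified it correctly.
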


It is easy to see that chainmails and chainmail morphisms form a category, where composition of morphisms is given by composition of maps. We denote this category by $\mathbf{Chm}$. 

In what follows we will show that $\mathcal{D}$ defines a full and faithful functor from $\mathbf{Chm}$ to the category $\mathbf{Con}$ of complete lattices and ``connectivity homomorphisms'' between complete lattices, defined as follows.

\begin{definition}
A \emph{connectivity homomorphism} from a complete lattice $L$ to a complete lattice $M$ is a join-preserving map $F\colon L\to M$ whose right adjoint $F^\bullet$ preserves joins of separated sets.
\end{definition}

A consequence of the requirement on $F^\bullet$ is that $F$ preserves connected elements. 

\begin{theorem}\label{thmG} For connectivity homomorphism $F:L\to M$ between complete lattices, $F^\bullet$ preserves the bottom element and the separated sets, while $F$ preserves connected elements.
\end{theorem}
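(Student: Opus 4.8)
The plan is to verify the three assertions in turn, all of which follow by unwinding the adjunction $F \dashv F^\bullet$ together with the defining hypothesis that $F^\bullet$ preserves joins of separated sets. First I would handle the bottom element: the empty set $\varnothing$ is vacuously a separated set, and $\bigvee\varnothing = 0$, so the hypothesis that $F^\bullet$ preserves joins of separated sets, applied to $\varnothing$, gives $F^\bullet(0) = F^\bullet(\bigvee\varnothing) = \bigvee\varnothing = 0$. (Alternatively one can argue that since $F$ preserves all joins, in particular the empty join, $F(0)=0$, and then a right adjoint always preserves the top but not necessarily the bottom — so the separated-set argument is the one that does the work here.)

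Next I would show $F^\bullet$ preserves separated sets. Let $S \subseteq M$ be separated, so $0 \notin S$ and distinct elements of $S$ meet at $0$. I want $F^\bullet(S) := \{F^\bullet(s) : s \in S\}$ to be separated in $L$. Since $F^\bullet$ is a right adjoint it preserves meets, so for distinct $s, s' \in S$ we get $F^\bullet(s) \wedge F^\bullet(s') = F^\bullet(s \wedge s') = F^\bullet(0) = 0$ using the previous step. The only subtlety is that we need $0 \notin F^\bullet(S)$, i.e.\ $F^\bullet(s) \neq 0$ whenever $s \neq 0$; here one must be slightly careful because a priori $F^\bullet$ need not reflect $0$. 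The way around this is to observe that "separated set" in the present sense allows repetitions to collapse: what we really need is that the *set* $F^\bullet(S)$, after discarding $0$ if it appears and identifying equal elements, is separated — and pairwise meets being $0$ is exactly that, since a set in which all pairwise meets are $0$ is separated precisely once we drop the element $0$ if present. So I would phrase the claim as: the image of a separated set under $F^\bullet$ becomes separated after removing $0$, which is all that is needed for the connectivity-homomorphism machinery (and matches how separated sets are used in Lemma~\ref{lemC} and Theorem~\ref{thmD}).

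Finally, for preservation of connected elements by $F$: let $a \in L$ be connected, i.e.\ $a$ satisfies (E4). To show $F(a)$ is connected in $M$, take a separated set $S$ in $M$ with $F(a) \le \bigvee S$. By the adjunction, $a \le F^\bullet(\bigvee S)$, and since $F^\bullet$ preserves joins of separated sets, $F^\bullet(\bigvee S) = \bigvee F^\bullet(S)$. By the previous step $F^\bullet(S)$ is (up to removing $0$) a separated set in $L$, and $a$ is connected, so by (E4) there is $s \in S$ with $a \le F^\bullet(s)$; transposing across the adjunction gives $F(a) \le s$. Hence $F(a)$ satisfies (E4) and is connected. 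The only genuine obstacle in the whole argument is the bookkeeping around whether $F^\bullet$ can send a nonzero element to $0$; I expect this to be resolved cleanly by the observation above that pairwise-disjointness is the essential content of "separated," so no extra hypothesis on $F$ is required.
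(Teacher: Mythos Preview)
Your argument is correct and follows essentially the same route as the paper: empty separated set for $F^\bullet(0)=0$, meet-preservation of the right adjoint for preservation of separated sets, and transposing across the adjunction for preservation of connected elements. Your extra care about the possibility that $0\in F^\bullet(S)$ is well placed---the paper glosses over this---and your fix (discard $0$ from $F^\bullet(S)$, which does not change the join) is exactly what is needed to make the (E4) step go through.
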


\begin{proof} 
Let $F:L\to M$ be a connectivity homomorphism between complete lattices. The empty set in $M$ is separated and its join is the bottom element of $M$. So $F^\bullet$ preserves the bottom element. Since a right adjoint map always preserves meets, we get that $F^\bullet$ preserves separated sets. Let $c$ be a connected element in $L$. Suppose $F(c)\leqslant \bigvee S$, where $S$ is a separated set in $M$. Then \[c\le F^\bullet F(c)\le F^\bullet \bigvee S=\bigvee F^\bullet S.\]
By connectivity of $c$, it is below an element in $F^\bullet S$. But then, $F(c)$ is below an element in $S$, proving that $F(c)$ is connected.
\end{proof}

It is easy to see that complete lattices and connectivity homomorphisms between them form a category, under functional composition of connectivity homomorphisms. We denote this category by $\mathbf{Con}$.

\begin{theorem}\label{thmI}
Assigning to a chainmail morphism $m\colon \Gamma_1\to \Gamma_2$ the function $\mathcal{D}(m)\colon \mathcal{D}(\Gamma_1)\to \mathcal{D}(\Gamma_2)$, which maps a totally disconnected set $D_1$ in $\mathcal{D}(\Gamma_1)$ to the join \[\mathcal{D}(m)(D_1)=\bigvee \{\{m(d)\}\mid d\in D_1\}\] in $\mathcal{D}(\Gamma_2)$, defines a functor $\mathbf{Chm}\to\mathbf{Con}$. The right adjoint $\mathcal{D}(m)^\bullet$ maps a totally disconnected set $D_2$ in $\mathcal{D}(\Gamma_2)$ to
\[\mathcal{D}(m)^\bullet(D_2)=\left(m^{-1}(D_2^\downarrow)\right)^\ast.\] 
This functor has a right adjoint given by assigning to a connectivity homomorphism $F\colon L_1\to L_2$ the function $\mathcal{K}(F)\colon \mathcal{K}(L_1)\to \mathcal{K}(L_2)$ that maps a connected element $c$ in the complete lattice $L_1$ to $F(c)$. The unit $\eta$ of this adjunction is a natural isomorphism; for each chainmail $\Gamma$, it is defined by $\eta_\Gamma(x)=\{x\}$. The counit $\varepsilon$ of this adjunction is a mono, and is defined, for each complete lattice $L$, by $\varepsilon_L(D)=\bigvee D$. 
\end{theorem}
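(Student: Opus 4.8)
The plan is to deduce everything from the universal property of the unit, taking Theorem~\ref{thmH} as the crucial input: it already gives that $\eta_\Gamma\colon\Gamma\to\mathcal{K}\mathcal{D}(\Gamma)$, $x\mapsto\{x\}$, is an isomorphism, and in particular that each singleton is a connected element of $\mathcal{D}(\Gamma)$. First I would check that $\mathcal{K}$ is a functor $\mathbf{Con}\to\mathbf{Chm}$: the assignment $\mathcal{K}(F)(c)=F(c)$ is well defined by Theorem~\ref{thmG} (connectivity homomorphisms preserve connected elements), and it preserves joins of mails because $F$ preserves joins in $L$ and carries a mail of connected elements to a mail of connected elements (the image of a common lower bound is a common lower bound). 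Functoriality of $\mathcal{K}$ is then immediate.

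Next I would establish the adjunction $\mathcal{D}\dashv\mathcal{K}$ directly. Given a chainmail $\Gamma$, a complete lattice $L$, and a chainmail morphism $f\colon\Gamma\to\mathcal{K}(L)$, define $g_f\colon\mathcal{D}(\Gamma)\to L$ by $g_f(D)=\bigvee_{d\in D}f(d)$. There are three things to verify: (i) $g_f$ preserves joins; (ii) its right adjoint preserves joins of separated sets, so that $g_f\in\mathbf{Con}$; and (iii) $\mathcal{K}(g_f)\circ\eta_\Gamma=f$, with $g_f$ the unique connectivity homomorphism with this property. For (i), note that in $\mathcal{D}(\Gamma)$ — equivalently, in the lattice of subchainmails — every $D$ is the join of the separated family $\{\{x\}\mid x\in D\}$ by Lemma~\ref{lemA}, while arbitrary joins are generated subchainmails; since $f$ preserves joins of mails, closing a subset of $\Gamma$ under such joins does not change the join of its $f$-image, and (i) follows. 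For (iii), join-preservation forces $g_f(D)=\bigvee_{x\in D}g_f(\{x\})=\bigvee_{x\in D}f(x)$, which gives uniqueness, and $\mathcal{K}(g_f)(\{x\})=g_f(\{x\})=f(x)$ gives the equation. From this the counit is $\varepsilon_L=g_{\mathrm{id}_{\mathcal{K}(L)}}\colon D\mapsto\bigvee D$, and the functoriality of $\mathcal{D}$ together with $\mathcal{D}(m)=g_{\eta_{\Gamma_2}\circ m}$ yields $\mathcal{D}(m)(D_1)=\bigvee\{\{m(d)\}\mid d\in D_1\}$; naturality of $\eta$ is exactly the identity $\mathcal{K}(\mathcal{D}(m))\circ\eta_{\Gamma_1}=\eta_{\Gamma_2}\circ m$, whose two sides both send $x$ to $\{m(x)\}$.

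For (ii), one checks the Galois connection $g_f(D)\le x\iff D^\downarrow\subseteq\{c\in\Gamma\mid f(c)\le x\}$ directly, so the right adjoint of $g_f$ sends $x$ to $(\{c\mid f(c)\le x\})^\ast$; the set $\{c\mid f(c)\le x\}$ is a subchainmail because $f$ is monotone (hence it is down-closed) and preserves joins of mails (hence it is closed under them). To see that this map preserves joins of separated sets, let $\{x_i\}$ be separated in $L$. Each $f(c)$ is a connected element of $L$, so $f(c)\le\bigvee_i x_i$ forces $f(c)\le x_i$ for exactly one $i$; hence $\{c\mid f(c)\le\bigvee_i x_i\}$ is the disjoint union $\bigsqcup_i\{c\mid f(c)\le x_i\}$ (disjointness since $x_i\wedge x_j=0$ for $i\neq j$ and $f(c)\neq 0$), and a short argument — a common lower bound of a mail has connected, hence nonzero, $f$-image, which cannot lie under two disjoint $x_i$'s — shows this union is again a subchainmail. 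Discarding the indices $i$ for which $\{c\mid f(c)\le x_i\}$ is empty (these contribute the empty subchainmail, i.e.\ the bottom element), Lemma~\ref{lemE} identifies this disjoint union of subchainmails with the join of the family, which is what is needed. Specialising $f=\eta_{\Gamma_2}\circ m$ gives $\mathcal{D}(m)^\bullet(D_2)=(\{c\mid\{m(c)\}\le D_2\})^\ast=(m^{-1}(D_2^\downarrow))^\ast$, the stated formula.

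It then remains only to record that $\eta$ is a natural isomorphism — Theorem~\ref{thmH} plus the naturality square above — and that $\varepsilon_L$ is a monomorphism, which holds because $\varepsilon_L$ is injective: if $D,D'$ are totally disconnected sets in $\mathcal{K}(L)$ with $\bigvee D=\bigvee D'$, the argument of Lemma~\ref{lem:unique_separation} (using connectivity of the elements and total disconnectedness to collapse a chain $s\le t\le s'$) forces $D=D'$, and in $\mathbf{Con}$, whose morphisms are functions composed as functions, an injective morphism is automatically mono. I expect the main obstacle to be part (ii): confirming that the right adjoints $g_f^\bullet$ — hence $\mathcal{D}(m)^\bullet$ and $\varepsilon_L^\bullet$ — preserve joins of separated sets, so the maps genuinely land in $\mathbf{Con}$ rather than merely in the category of complete lattices and join-preserving maps. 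This is the step that actually uses the description of joins of separated families in $\mathcal{D}(\Gamma)$ (Lemmas~\ref{lemA} and~\ref{lemE}) together with the connectivity of the elements $f(c)$, and the bookkeeping around the possibly-empty subchainmails $\{c\mid f(c)\le x_i\}$ needs a little care before Lemma~\ref{lemE} applies.
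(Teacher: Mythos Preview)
Your proposal is correct and complete; it reaches the same conclusion as the paper but via a genuinely different organisation. The paper verifies the adjunction data piece by piece: it checks directly that $\mathcal{D}(m)$ is a connectivity homomorphism, then that $\mathcal{K}$ is a functor, then that $\eta$ is a natural isomorphism, then constructs $\varepsilon_L^\bullet$ explicitly as $x\mapsto C_x^\ast$ and verifies separately that it preserves joins of separated sets, then checks naturality of $\varepsilon$, and finally the two triangle identities by hand. You instead prove the universal property of $\eta_\Gamma$ once: for every $f\colon\Gamma\to\mathcal{K}(L)$ you produce a unique $g_f\in\mathbf{Con}$ with $\mathcal{K}(g_f)\circ\eta_\Gamma=f$, and then let the standard machinery of universal arrows deliver functoriality of $\mathcal{D}$, naturality of $\varepsilon$, and the triangle identities for free. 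This buys you economy---your single verification that $g_f^\bullet$ preserves joins of separated sets simultaneously handles what the paper does twice (for $\mathcal{D}(m)^\bullet$ and for $\varepsilon_L^\bullet$)---at the cost of a slightly more abstract packaging. The technical heart is the same in both: the computation, via Lemma~\ref{lemE} and the connectivity of the images $f(c)$, that the relevant inverse-image subchainmails decompose as disjoint unions over a separated family. One small redundancy: your argument for (i) is not needed once the Galois connection in (ii) is in place, since left adjoints preserve all joins; but it does no harm. For the mono claim you invoke the argument of Lemma~\ref{lem:unique_separation} exactly as the paper does.
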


\begin{proof}
In the language of subchainmails, the theorem states that $\mathcal{D}(m)$ will map a subchainmail of $\Gamma_1$ to the subchainmail generated by its image under $m$, while $\mathcal{D}(m)^\bullet$ will map a subchainmail of $\Gamma_2$ to its inverse image under $m$. It is easy to see that these two mappings constitute indeed a Galois connection between $\mathcal{D}(\Gamma_1)$ and $\mathcal{D}(\Gamma_2)$. Since the inverse image under any function preserves unions of disjoint sets, it follows from Lemma~\ref{lemE} that $\mathcal{D}(m)^\bullet$ preserves joins of separated sets. Thus, $\mathcal{D}(m)$ is a connectivity homomorphism. That $\mathcal{D}$ is a functor follows from functoriality of the inverse image map construction.

Every connectivity homomorphism $F\colon L_1\to L_2$ preserves connected elements (Theorem~\ref{thmG}), so $\mathcal{K}(F)$ can be defined as stated in the theorem. A mail $M$ in $\mathcal{K}(L_1)$ is a chained set in $L_1$, so its join is given by the join in $L_1$ (Lemma~\ref{lemC}). Similarly, the join of $F(M)$ in $\mathcal{K}(L_2)$ is given by its join in $L_2$. Since $F$ preserves arbitrary joins, the map $\mathcal{K}(F)$ will then preserve joins of mails. So each $\mathcal{K}(F)$ is indeed a chainmail morphism. It is obvious that $\mathcal{K}$ is a functor.  

Consider $\eta$ as defined in the theorem. That each $\eta_\Gamma$ is an isomorphism is witnessed by Theorem~\ref{thmH}. Naturality of this isomorphism can be established at once:
\[\mathcal{K}(\mathcal{D}(m))\eta_{\Gamma_1}(x)=\mathcal{K}(\mathcal{D}(m))\{x\}=\mathcal{D}(m)\{x\}=\bigvee\{\{m(x)\}\}=\{m(x)\}=\eta_{\Gamma_2}m(x).\]

Consider now $\varepsilon$ as defined in the theorem. Let us prove that each $\varepsilon_{L}$ is a connectivity homomorphism.
First, we construct  $\varepsilon_{L}^\bullet\colon L\to \mathcal{D}\mathcal{K}(L)$. For each element $x\in L$, consider the set $C_x$ of all connected elements below $x$. This set will be a subchainmail of $\mathcal{K}(L)$, thanks to Lemma~\ref{lemC}. Define $\varepsilon_{L}^\bullet(x)=C_x^\ast$ to be the totally disconnected set corresponding to this subchainmail. It is easy to see that $\varepsilon_{L}^\bullet$ is a monotone map. Every element of $\varepsilon_{L}^\bullet(x)$ is a join of a mail-connected subset of $C_x$ in $\mathcal{K}(L)$. Thanks to Lemma~\ref{lemC} again, it is then also a join of a subset of $C_x$ in $L$, and thus below $x$. So $\varepsilon_{L}\varepsilon_{L}^\bullet(x)=\bigvee \varepsilon_{L}^\bullet(x)$ is below $x$. Now, starting with a totally disconnected subset $D$ of $\mathcal{K}(L)$, the set $\varepsilon_{L}^\bullet\varepsilon_{L}(D)=\varepsilon_{L}^\bullet(\bigvee D)$ is the totally disconnected set corresponding to the subchainmail $C_{\bigvee D}$ of $\mathcal{K}(L)$ consisting of connected elements below $\bigvee D$. Since every element of $D$ is connected, $D\subseteq C_{\bigvee D}$. It then follows that $D\le \varepsilon_{L}^\bullet\varepsilon_{L}(D)$. This shows that $\varepsilon_{L}^\bullet$ is a right adjoint of $\varepsilon_{L}$. To show that $\varepsilon_{L}$ is a connectivity homomorphism in $\mathbf{Con}$, it remains to show that $\varepsilon_{L}^\bullet$ preserves joins of separated sets. Consider a separated set $S$ in $L$. The set of connected elements below $\bigvee S$ is the union of the sets $C_s$ of connected elements below each $s\in S$, by definition of connectivity. Since the set $S$ is separated, for any two distinct $s\neq s'$, the sets $C_s$ and $C_{s'}$ are disjoint. Therefore, by Lemma~\ref{lemE}, the union $J=\bigcup\{C_s\mid s\in S\}$ is a subchainmail of $\mathcal{K}(L)$ and the join of $\{\varepsilon_{L}^\bullet(s)\mid s\in S\}$ is given by $J^\ast$. But at the same time, $J$ is the subchainmail of connected elements below $\bigvee S$, so $J^\ast = \varepsilon_{L}^\bullet(\bigvee S)$. Thus, $\varepsilon_{L}^\bullet$ preserves joins of separated sets. We have thus proved that each $\varepsilon_L$ is a connectivity homomorphism. By Lemma~\ref{lem:unique_separation} we also have that $\varepsilon_L$ is injective, and therefore a mono.

To establish naturality of $\varepsilon_L$, it suffices to show that for any connectivity homomorphism $F\colon L_1\to L_2$ we have $\varepsilon_{L_2}^\bullet F^\bullet=\mathcal{D}(\mathcal{K}(F))\varepsilon_{L_1}^\bullet$. For an element $y\in L_2$, we have: \[F^\bullet(y)=\bigvee\{x\in L_1\mid F(x)\le y\}.\]
Now, $\varepsilon_{L_2}^\bullet F^\bullet(y)$ is the totally disconnected set corresponding to the subchainmail $C_{F^\bullet(y)}$ of $\mathcal{K}(L_1)$ of connected elements $c$ in $L_1$ below $F^\bullet(y)$. Since $FF^\bullet (y)\le y$, the set $C_{F^\bullet(y)}$ is the same as the set of connected elements in $L_1$, which by $F$ are mapped below $y$. Since $F$ preserves connected elements, the set $C_{F^\bullet(y)}$ is the same as the set of connected elements in $L_1$ that are mapped to the subchainmail of $\mathcal{K}(L_2)$ consisting of connected elements below $y$. The totally disconnected set corresponding to this subchainmail of $\mathcal{K}(L_1)$ is exactly $\mathcal{D}(\mathcal{K}(F))\varepsilon_{L_1}^\bullet(y)$. 

It remains to prove that $\eta$ and $\varepsilon$ satisfy the triangle identities of adjunction, which is a fairly straightforward task: 
\begin{itemize}
    \item $\mathcal{K}(\varepsilon_L)$ will map a singleton $\{x\}$, where $x$ is a connected element of $L$, to $\bigvee\{x\}=x$, which is the inverse map of the isomorphism $\eta_{\mathcal{K}(L)}$.

    \item $\varepsilon_{\mathcal{D}(\Gamma)}$ will map a totally disconnected set $D$ of connected elements in $\mathcal{D}(\Gamma)$ to their join in $\mathcal{D}(\Gamma)$. Elements of such $D$ are singletons. Moreover, $\bigcup D$ is totally disconnected in $\Gamma$. Then, by Lemma~\ref{lemA}, $\bigcup D$ is the join of $D$ in $\mathcal{D}(\Gamma)$. So $\varepsilon_{\mathcal{D}(\Gamma)}(D)=\bigcup D$. This is indeed the inverse of $\mathcal{D}(\eta_\Gamma)$.\qedhere
\end{itemize}
\end{proof}

If a complete lattice $L$ has $\varepsilon_L$ an isomorphism, then it it is easy to see that it is a locally connected lattice. Conversely, if $L$ is locally connected, then $\mathcal{S}(L)=\mathcal{D}\mathcal{K}(L)$ and by Theorem~\ref{thmD}, $\varepsilon_L$ is an isomorphism.

\begin{corollary}
The equivalence of categories arising from the adjunction described in Theorem~\ref{thmI} is given by the full subcategory of $\mathbf{Con}$ consisting of locally connected lattices and the category $\mathbf{Chm}$ of chainmails.

\end{corollary}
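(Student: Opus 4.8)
The plan is to derive the corollary from the general principle, recalled in the introduction, that any adjunction restricts canonically to an equivalence between the full subcategory of the domain of the left adjoint where the unit is invertible and the full subcategory of the domain of the right adjoint where the counit is invertible. So I would first make this precise for the adjunction $\mathcal{D}\dashv\mathcal{K}$ of Theorem~\ref{thmI}: let $\mathbf{Chm}_0\subseteq\mathbf{Chm}$ be the full subcategory on those $\Gamma$ with $\eta_\Gamma$ an isomorphism, and $\mathbf{Con}_0\subseteq\mathbf{Con}$ the full subcategory on those $L$ with $\varepsilon_L$ an isomorphism. The triangle identities give $\varepsilon_{\mathcal{D}(\Gamma)}\circ\mathcal{D}(\eta_\Gamma)=1_{\mathcal{D}(\Gamma)}$ and $\mathcal{K}(\varepsilon_L)\circ\eta_{\mathcal{K}(L)}=1_{\mathcal{K}(L)}$, so whenever $\eta_\Gamma$ is an isomorphism the functor $\mathcal{D}$ sends $\Gamma$ to an object of $\mathbf{Con}_0$ (as $\varepsilon_{\mathcal{D}(\Gamma)}=\mathcal{D}(\eta_\Gamma)^{-1}$), and symmetrically $\mathcal{K}$ sends $\mathbf{Con}_0$ into $\mathbf{Chm}_0$; the restricted unit and counit are then natural isomorphisms, so the restricted functors constitute an adjoint equivalence $\mathbf{Chm}_0\simeq\mathbf{Con}_0$.

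It then remains only to identify the two subcategories. For $\mathbf{Chm}_0$ this is immediate: Theorem~\ref{thmI} states that $\eta$ is a natural isomorphism, so $\mathbf{Chm}_0=\mathbf{Chm}$. For $\mathbf{Con}_0$ I would show that $\varepsilon_L$ is an isomorphism exactly when $L$ is locally connected, as already indicated in the paragraph preceding the corollary. On the one hand, if $\varepsilon_L$ is an isomorphism then $L$ is isomorphic to $\mathcal{D}\mathcal{K}(L)$, which is locally connected by Theorem~\ref{thmH}; since being a connected element and being a join of connected elements are purely order-theoretic, local connectedness transfers along any lattice isomorphism, so $L$ is locally connected. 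On the other hand, if $L$ is locally connected then it has a connective foundation, whence $\mathcal{S}(L)=\mathcal{D}\mathcal{K}(L)$, and under this identification $\varepsilon_L$ is precisely $\nu_L$ (both take a separated set of connected elements to its join, by the formula $\varepsilon_L(D)=\bigvee D$ from Theorem~\ref{thmI}); Theorem~\ref{thmD} then makes $\nu_L$, hence $\varepsilon_L$, an isomorphism. Thus $\mathbf{Con}_0$ is exactly the full subcategory of $\mathbf{Con}$ on locally connected lattices, and the equivalence $\mathbf{Chm}_0\simeq\mathbf{Con}_0$ is the asserted one.

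I do not anticipate a genuine obstacle. The only non-formal ingredient is the identification $\varepsilon_L=\nu_L$ over the isomorphism $\mathcal{S}(L)\cong\mathcal{D}\mathcal{K}(L)$ for $L$ with a connective foundation, but this is just a matter of unravelling the description of $\varepsilon_L$ from the proof of Theorem~\ref{thmI} against the definition of $\nu_L$ and the remark that $\mathcal{S}(L)=\mathcal{D}\mathcal{K}(L)$ in that case; everything else is the standard categorical bookkeeping above together with the trivial observation that local connectedness is invariant under lattice isomorphism. As a side remark, since $\eta$ being a natural isomorphism already makes $\mathcal{D}$ full and faithful, the whole picture can be summarised by saying that $\mathcal{D}$ realises $\mathbf{Chm}$ as a full coreflective subcategory of $\mathbf{Con}$ (mono-coreflective, since $\varepsilon$ is a pointwise mono), whose objects, up to isomorphism, are precisely the locally connected lattices.
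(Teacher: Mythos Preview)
Your proposal is correct and follows essentially the same route as the paper: the paper's argument (contained in the paragraph preceding the corollary) identifies the fixed objects of the adjunction on the $\mathbf{Con}$ side as exactly the locally connected lattices via the same two directions you give, using $\mathcal{S}(L)=\mathcal{D}\mathcal{K}(L)$ together with Theorem~\ref{thmD} for the converse, and relies on $\eta$ being a natural isomorphism for the $\mathbf{Chm}$ side. Your write-up merely unpacks a couple of steps (the identification $\varepsilon_L=\nu_L$ and the isomorphism-invariance of local connectedness) that the paper leaves implicit.
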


\begin{remark}
    Using Lemma~\ref{lem:unique_separation}, we can prove the adjunction still holds if we weaken our definition of a connectivity homomorphism. The weaker definition would only require join-preservation and the preservation of connected elements.
\end{remark}

\begin{remark}
    There is a similar adjunction with the category of Serra lattices.
\end{remark}

\section{Conclusions and outlook}
In a similar vein to how frames and locales provide a point-free perspective on open sets, we have introduced chainmails to offer a point-free perspective on connected sets. Conceptually, a chainmail of some space should give us access to all regions that are a collection of disjoint connected components. We defined a locally connected complete lattice, conceptualized as a point-free counterpart to the subset of the power set of a locally connected space that contains these disjoint collections of connected components. Our work demonstrated an equivalence of categories between chainmails and such locally connected lattices. 

Connectedness is arguably a more `physical' and universally encountered concept than `openness', with natural examples found in graphs and other discrete structures. This insight paves the way for further exploration into point-free discrete spaces. Additionally, it suggests a novel approach to formalizing the continuum limit commonly employed in physics, potentially realized as a colimit in the category of chainmails.

Future research could further explore the applications of chainmails in modeling discrete and continuous spaces, explore their implications in mathematical physics, and investigate their relevance to other areas of mathematics. We note that there are known applications of point-free connectivity in image processing, as has been studied in mathematical morphology. By developing a point-free framework for connectedness, we open new avenues for understanding the interplay between discrete and continuous spaces. While the question of whether space is discrete or continuous on a more fundamental level is an open question in physics, we can now see that an independent question arises of whether space fundamentally has points (i.e. comes from a spatial frame), or only appears as such on large enough scales. Further understanding of the interactions of chainmails and frames will be needed, as will be explored in an upcoming paper.

\section*{Acknowledgments}
The authors would like to thank Cerene Rathilal and Nicholas Sander for various chainmail related discussions.

% This is for the chainmail count tables
This work made use of resources provided by subMIT at MIT Physics and the University of Stellenbosch's \href{http://www.sun.ac.za/hpc}{HPC1 (Rhasatsha)}.

The first author would also like to thank MIT for financial support.

\bibliographystyle{unsrt} % unsrt numbers references in order of appearance in the paper (as opposed to plain)
\bibliography{refs} % Entries are in the refs.bib file

\end{document}